\numberwithin{equation}{section}
\newtheorem{theorem}{Theorem}[section]
\newtheorem{lemma}[theorem]{Lemma}
\newtheorem{corollary}[theorem]{Corollary}
\theoremstyle{definition}
\newtheorem{definition}[theorem]{Definition}
\newtheorem{example}[theorem]{Example}
\newtheorem{remark}[theorem]{Remark}
\newtheorem{construction}[theorem]{Construction}
\newcommand{\tarc}{\mbox{\large$\frown$}}
\newcommand{\arc}[1]{\stackrel{\tarc}{#1}}
\newcolumntype{P}[1]{>{\centering\arraybackslash}p{#1}}
\title{On meandric permutations}
\author{Viktor Lopatkin}
\address{HSE University, Faculty of Computer Science, Pokrovsky Boulevard 11, Moscow, 109028 Russia \ wickktor@gmail.com
}
\begin{document}


\maketitle
\begin{abstract}
We give criteria for a permutation to be meandric. Furthermore, we construct a bijection between meanders and specific types of Gauss diagrams derived from the Thurston generators of braid groups. This enables us to devise an algorithm for constructing such diagrams and to encode meanders using matrices that are precisely the incidence matrices of the corresponding adjacency graphs of the diagrams. Ultimately, we demonstrate that these matrices are idempotent over the field GF(2) and derive a criterion for a permutation to be meandric.
\end{abstract}

\section*{Introduction}

The term \textit{meander} was introduced by Arnold in \cite{A88} to denote a connected oriented non-self-intersecting curve in the plane intersecting a fixed oriented baseline in $n$ points. The intersections are assumed to be strict crossings (=transversal intersections).  

\begin{figure}[h!]
    \centering
     \begin{tikzpicture}[scale = 0.7]
      \draw[-Latex,line width = 1] (-1,0) to (8,0);
      \draw[line width =1] (2,1) to [out = 200, in = 90] (1,0);
      \draw[line width =1] (1,0) to [out = 270, in = 270] (4,0);
      \draw[line width =1] (4,0) to [out = 90, in = 90] (3,0);
      \draw[line width =1] (3,0) to [out = 270, in = 270] (2,0);
      \draw[line width =1] (2,0) to [out = 90, in = 90] (5,0);
      \draw[line width =1] (5,0) to [out = 270, in = 270] (6,0);
      \draw[line width =1] (6,0) to [out = 90, in = 330] (5,1);
       \fill(8,0) node[above] {$\mathbb{R}^1$};
     \end{tikzpicture}
    \caption{In other words, a meander can also be defined as follows. Take a fixed oriented line $L$ in $\mathbb{R}^2$, a meander of order $n$ (in this case $n = 6)$ is a non-self-intersecting curve in $\mathbb{R}^2$ which transversely intersects the line at $n$ points for some positive integer $n$.}
    \label{meander2}
\end{figure}

Meanders are of interest in physics and computational biology as models of polymer folding \cite{DFGG97}. They arise in other guises in polymer physics, algebraic geometry, and the study of planar algebras, especially the Temperley--Lieb algebra. For applications of meanders, the reader is referred to \cite{A88, DiF95, R83}. There is also a vast mathematical literature devoted to the enumeration of various types of meanders, which has connections to many different subjects, from combinatorics to theoretical physics and more recently to the geometry of moduli spaces \cite{DFGG97}. We refer \cite{Z21} to a brief recent survey of this literature.

The permutation defined by ordering the intersection points, first along the baseline and then along the meander, is called a \textit{meander permutation} or \textit{meandric.}

\begin{figure}[h!]
     \begin{tikzpicture}[scale = 0.7]
      \draw[-Latex,line width = 1] (-1,0) to (8,0);
      \draw[-Latex,line width = 1] (-1,0) to (8,0);
      \draw[line width =1] (2,1) to [out = 200, in = 90] (1,0);
      \draw[line width =1] (1,0) to [out = 270, in = 270] (4,0);
      \draw[line width =1] (4,0) to [out = 90, in = 90] (3,0);
      \draw[line width =1] (3,0) to [out = 270, in = 270] (2,0);
      \draw[line width =1] (2,0) to [out = 90, in = 90] (5,0);
      \draw[line width =1] (5,0) to [out = 270, in = 270] (6,0);
      \draw[line width =1] (6,0) to [out = 90, in = 330] (5,1);
     {\foreach \x in
       { 1,2,3,4,5,6
        }
     {
        \fill(\x,0) node[below] {$\x$};
        \fill(\x,0) circle (4pt) ;
              }
    }
    \fill(8,0) node[above] {$\mathbb{R}^1$};
\end{tikzpicture}
    \caption{This meander corresponds to permutation $ (143256)$}\label{meander4}
\end{figure}

Interest in meandric permutations predated modern interest in the enumerative theory of meanders. They were discussed for instance by P. Rosenstiehl in \cite{R83} as planar permutations. These permutations occur in the analysis of geographical data and have the property that they can be sorted in linear time \cite{R83}. 

A natural question is how to determine whether a given permutation is a meandric permutation?

In this paper, we give an answer to this question (see Corollary \ref{the_main_criteria}) as follows;

\textit{a permutation $\pi$ is meandric if and only if 
    \[
     c_{\pi}(i,j) \equiv \begin{cases}
         1 (\bmod{2}), & \mbox{if there is an inversion $(i,j)$, }\\
         0 (\bmod{2}), & \mbox{if there is no inversion $(i,j)$},
     \end{cases}
    \]
    for any $i,j \in \{1,\ldots, n\}.$
}

Where $c_\pi(i,j)$ are defined as follows. For a given permutation $\pi$ of $n$ numbers $\{1,2,\ldots, n\}$, and for $i,j \in \{1, \ldots, n\}$ we call all inversions of form $(i,k)$, $(j,k)$ \textit{common inversions} for $i,j$, and a number of all common inversions we denote by $c_\pi(i,j)$.

In this paper, we use the following ideas.

\begin{enumerate}
    \item Any permutation $\pi$ gives rise to a Gauss diagram $\mathfrak{G}(\pi)$. We show that $\mathfrak{G}(\pi)$ is realizable if and only if $\pi$ is a meandric permutation (see Theorem \ref{realization=meandric}). 
    \item For any Gauss diagram $\mathfrak{G}$, we correspond a symmetric matrix $M(\mathfrak{G})$ over the field $\mathsf{GF}(2)$ (this matrix is exactly an adjacency matrix for the corresponding chord intersection graph for the Gauss diagram).
    \item Using criteria of realization of Gauss diagrams obtained by B. Shtylla, L. Tradli, and L. Zulli in \cite{STZ09} we deduce (see Theorem \ref{the_main_result}) that a permutation $\pi$ is meandric if and only if
    \[
     (M(\mathfrak{G}(\pi)))^2 \equiv M(\mathfrak{G}(\pi)) \bmod{2}.
    \]
Thus, any meander correspondences to a projector operator.
 \item Next, as a corollary of this result, we deduce the criterion (see Corollary \ref{the_main_criteria}) for a permutation to be meandric.
\end{enumerate}

Finally, we present Algorithm \ref{algorithm} gives a construction of meandric permutations (=meanders).

\subsection*{Acknowledgments:} the author would like to express his deepest gratitude to \textsc{prof. Alexander Zvonkin}, for his great support and for having kindly clarified some very important details, and for his time to read the text. Special thanks are due \textsc{Dr. Yury Belousov} for his useful discussions.

\section{Meanders}

To introduce a strict formal definition of meander, we essentially follow \cite{B22}

\begin{definition}[{\cite[Definition 1,2]{B22}}]\label{meander}
    \textit{A meander $M_n$ of order} $n$ is a triple $(\mathcal{D}, \{m_0, m_1, p_0,p_1\}, \{\mu, \ell\})$ consist of
    \begin{enumerate}
        \item $2$-dimensional disk $\mathcal{D}$;
        \item four distinct points $\{ m_0,m_1, p_0,p_1\}$ on the boundary $\partial \mathcal{D}$ such that there exists a connected component of $\partial \mathcal{D}\setminus \{m_0,p_0\}$ containing $\{m_1,p_1\}$;
        \item $\mu, \ell : [0,1] \to \mathcal{D}$ are smooth proper embeddings such that $\mu(0) = m_0$, $\mu(1) = m_1$, $\ell(0) = p_0$, and $\ell(1) = p_1$, and the curves $\mu$, $\ell$ intersect transversely at $n$ points.
    \end{enumerate}

 Two meanders $M_n = (\mathcal{D}, \{m_0,m_1, p_0, p_1\}, \{\mu, \ell\})$, $M_n' = (\mathcal{D}', \{m_0',m_1', p_0', p_1'\}, \{\mu', \ell'\})$ are \textit{equivalent} if there exists a homeomorphism $f: \mathcal{D} \to \mathcal{D}'$ such that $f(\mu) = \mu'$, $f(\ell) = \ell'$, and $f(m_i) = m_i'$, $f(p_i) = p_i'$ for each $i=1,2$.
\end{definition}

\begin{figure}[h!]
    \centering
    \begin{tikzpicture}[scale =1.7]
        \begin{scope}
            \fill[gray!10] (1.5, 0) circle (1.5);
            \draw[thick] (0, 0) to (3, 0);
            \draw[ultra thick] (0.252781, 0.833333) to[out = 0, in = 90, distance = 6.28318] (0.5, 0)
            to[out = -90, in = -90, distance = 18.8496] (2, 0)
            to[out = 90, in = 90, distance = 6.28318] (1.5, 0)
            to[out = -90, in = -90, distance = 6.28318] (1, 0)
            to[out = 90, in = 90, distance = 18.8496] (2.5, 0)
            to[out = -90, in = 180, distance = 6.28318] (2.74722, -0.833333);
           \draw[help lines] (1.5, 0) circle (1.5);
           \draw[fill] (0.252781, 0.833333) circle (0.05);
           \node at (0.252781, 0.833333) [left] {$m_0$};
           \draw[fill] (2.74722, -0.833333) circle (0.05);
           \node at (2.74722, -0.833333) [right] {$m_1$};
           \draw[fill] (0, 0) circle (0.05);
           \node at (0,0) [left] {$p_0$};
           \draw[fill] (3, 0) circle (0.05);
           \node at (3,0) [right] {$p_1$};
           \fill (0.5,0) circle (0.05);
           \node at (0.5,0) [below left] {$1$};
           \fill (1,0) circle (0.05);
           \node at (1,0) [below left] {$2$};
           \fill (1.5,0) circle (0.05);
           \node at (1.5,0) [below right] {$3$};
           \fill (2,0) circle (0.05);
           \node at (2,0) [below right] {$4$};
           \fill (2.5,0) circle (0.05);
           \node at (2.5,0) [below right] {$5$};
        \end{scope}
       \begin{scope}[xshift = 5 cm]
             \fill[gray!10] (1.5, 0) circle (1.5);
             \draw[thick] (0, 0) to (3, 0);
             \draw[ultra thick] (0.180987, 0.714286) to[out = 0, in = 90, distance = 5.38559] (0.428571, 0)
             to[out = -90, in = -90, distance = 16.1568] (1.71429, 0)
             to[out = 90, in = 90, distance = 5.38559] (1.28571, 0)
             to[out = -90, in = -90, distance = 5.38559] (0.857143, 0)
             to[out = 90, in = 90, distance = 16.1568] (2.14286, 0)
             to[out = -90, in = -90, distance = 5.38559] (2.57143, 0)
             to[out = 90, in = 180, distance = 5.38559] (2.81901, 0.714286);
             \fill (0.428571, 0) circle (0.05);
             \node at (0.428571, 0) [below left] {$1$};

             \fill (0.857143, 0) circle (0.05);
             \node at (0.857143, 0) [below left] {$2$};

             \fill (1.28571, 0) circle (0.05);
             \node at (1.28571, 0) [below right] {$3$};

             \fill (1.71429, 0) circle (0.05);
             \node at (1.71429, 0) [below right] {$4$};

             \fill (2.14286, 0) circle (0.05);
             \node at (2.14286, 0) [below] {$5$};

             \fill (2.57143, 0) circle (0.05);
             \node at (2.57143, 0) [below right] {$6$};
             
             \draw[help lines] (1.5, 0) circle (1.5);
             \draw[fill] (0.180987, 0.714286) circle (0.05);
             \node at (0.180987, 0.714286) [left] {$m_0$};
             \draw[fill] (2.81901, 0.714286) circle (0.05);
             \node at (2.81901, 0.714286)[right] {$m_1$};
             \draw[fill] (0, 0) circle (0.05);
             \node at (0,0) [left] {$p_0$};
             \draw[fill] (3, 0) circle (0.05);
             \node at (3,0)[right] {$p_1$};
       \end{scope}
    \end{tikzpicture}
    \caption{Two meanders.}
    \label{fig:my_label}
\end{figure}

\begin{remark}
    We always draw meanders in such a way that $\mathcal{D}$ is a Euclidean disk in $\mathbb{R}^2$, $\ell$ is a diameter in $\mathcal{D}$, $p_0$ is the left endpoint of $\ell$ and $m_0$ is always drawn above $p_0$, and starting from $p_0$, in the clockwise direction along $\partial \mathcal{D}$, we met $m_0$ and then $m_1$. We denote this arc by $\arc{m_0m_1}$. Hence, we get oriented lines.
\end{remark}

There is a natural way to represent meanders by permutations.

\begin{definition}[{\textbf{Meander Permutation}}] Let $M_n = (\mathcal{D}, \{m_0, m_1, p_0,p_1\}, \{\mu, \ell\})$ be a meander of order $n$. Label all intersection points of $\mu$ and $\ell$ with natural numbers in the order of movement from $p_0$ to $p_1$. Writing down these labels in the order of movement from $m_0$ to $m_1$ along $\mu$, we obtain the \textit{meandric permutation} $\pi(M_n)$. Finally, we say that a permutation $\pi\in \mathfrak{S}_n$ is \textit{meandric} if there exist a meander $M_n$ of order $n$ such that $\pi(M_n) = \pi.$
\end{definition}

This allows us to identify any meander of order $n$ with the corresponding permutation $\pi\in \mathfrak{S}_{n}$ (=symmetric group on a set of size $n$).

From Definition \ref{meander}, it follows that the lines $\mu, \ell$ are homeomorphic. Then their roles can be reversed by imposing an orientation on the $\ell$ instead of the curve $\mu.$

More precisely, for a given meander $M_n = (\mathcal{D}, \{m_0, m_1, p_0,p_1\}, \{\mu, \ell\})$ of order $n$ we consider a homeomorphism $f:\mathbb{R}^2 \to \mathbb{R}^2$ such that $f(\mu) = \ell$, we then get a meander $f(M_n) := (\mathcal{D}, \{m_0, m_1, p_0,p_1\}, \{\ell, \mu\}).$

\begin{lemma}\label{inverse}
    The corresponding meandric permutation of $f(M_n)$ is inverse to the permutation of the meander $M_n$, \textit{i.e.,} $\pi(f(M_n)) = (\pi(M_n))^{-1}.$
\end{lemma}
\begin{proof}
 Moving from $p_0$ to $p_1$ along the curve $\mu$ (instead of $\ell$) and writing down the corresponding labels in order of the movement from $m_0$ to $m_1$ along $\ell$ we obtain the following permutation 
\[
\pi' = \begin{pmatrix}
     \pi(1) & \pi(2) & \ldots & \pi(n) \\ 1&2&\ldots & n
    \end{pmatrix}
\]
which is exactly $(\pi (M_n))^{-1},$ as claimed.    
\end{proof}

For a given permutation $\pi\in \mathfrak{S}_n$ we set $\overline{\pi} \in \mathfrak{S}_{n+1}$, where
\[
 \overline{\pi}: = \begin{pmatrix}
     1 & \ldots & n & n+1 \\
     \pi(1) & \ldots & \pi(n) & n+1
 \end{pmatrix}.
\]

\begin{lemma}\label{n<-->n+1}
 A permutation $\pi \in \mathfrak{S}_n$ is meandric if and only if $\overline{\pi}$ is meandric.   
\end{lemma}

Recall that a \textit{Jordan curve} is (= a \textit{simple closed curve}) in the plane is the image of an injective continuous map of a circle into the plane.

\begin{proof}~\\
(1) Let $\pi$ be meandric and $M_n = (\mathcal{D}, \{m_0,m_1, p_0, p_1\}, \{\mu, \ell\}) \in \mathbf{M}_n$ the corresponding meander. Consider an arc $\arc{m_0m_1}$ of $\partial \mathcal{D}$ such that $p_1 \notin \arc{m_0m_1}$.

Set $\vartheta:= \mu \cup \arc{m_0m_1}$. Then $\vartheta$ is a Jordan curve, then by Jordan curve theorem it divides the plane into two regions, say $I$ and $O$, and we assume that $p_1 \in O.$

Next, let $0 < t_1 < \ldots < t_n <1$ be such that $\mu \pitchfork \ell = \{\mu(t_1), \ldots, \mu(t_n)\}$, here, and further $\pitchfork$ denotes transversal intersection of curves.

On the other hand, $\ell$ divides $\mathcal{D}$ into two parts, say, $A$ and $B$ and assume that $m_1 \in A$, then, by continuously of $\mu$, there is $t_n < t_n'< 1$ such that $\mu(t_n') \in A$.

Take $m_1' \in \partial \mathcal{D} \cap B$ and set $\gamma:[t_n',1] \to O \cap \mathcal{D}$ to be a smooth embedding such that $\gamma(t_n') = \mu(t_n')$, $\gamma(1) \in \partial \mathcal{D}\setminus\{p_0, p_1,m_0, m_1\}$, and $\gamma \cap \ell  = \{q\}$.

Set
\[
 \mu'(t): = \begin{cases}
     \mu(t), & 0 \le t \le t_{n+1}, \\
     \gamma(t), & t_{n+1} \le t \le 1,
 \end{cases} \qquad \mu'(t_{n+1}) = q, \, t_n' < t_{n+1}<1.
\]

It is clear that $\mu'(t)$ is a continuous map $[0,1] \to \mathcal{D}$ such that $\mu|_{[0,t_n']}$ is smooth and $\mu|_{[0,t_n']} \cap \ell = \mu|_{[0,t_n']} \pitchfork \ell = \{m(t_1), \ldots, \mu(t_n)\}$. 

Next, using Whitney Approximation Theorem for maps to $\mu'$, we thus get a smooth proper embedding $\overline{\mu}:[0,1] \to \mathbb{R}^2$ such that $\mu|_{[0,t_n']} \cap \ell = \mu|_{[0,t_n']} \pitchfork \ell = \{m(t_1), \ldots, \mu(t_n)\}$.

Finally, it remains to show that $\mu'(t)$ intersects $\ell$ transversely at $\mu'(t_{n+1})$. To do so, we consider the following map
\[
 F: [0,1] \times \mathrm{B}(r) \to \mathbb{R}^2, \qquad (t, \mathbf{x}) \mapsto \overline{\mu}(t) + \mathbf{x}.
\]
where $B(r)$ is an open ball in $\mathbb{R}^2$ with radius $r>0$. Fixing $t\in [0,1]$, $F(t,\mathbf{x}):=\overline{\mu}(t) + \mathbf{x}$ is just a constant translation of the open ball $\mathrm{B}(r)$. Therefore, even without letting $t$ vary, the differential $\mathrm{D}F_{(t, \mathbf{x})}$ is a surjective map onto $\mathsf{T}_{F(t,\mathbf{x})}\mathbb{R}^2$. Therefore, $F\pitchfork \ell$ and by Thom's Transversality Theorem, it follows that for almost every $\mathbf{x} \in \mathrm{B}(r)$, the map $\overline{\mu}_\mathbf{x}(t):=\overline{\mu}(t)+\mathbf{x}$ is transversal to $\ell$. 

Thus, for $M_n = (\mathcal{D}, \{m_0,m_1, p_0, p_1\}, \{\mu, \ell\}) \in \mathbf{M}_n$ with permutation $\pi:=\pi(M_n)$ we have constructed a meander  
 \[
 \overline{M_n}: =(\mathcal{D}, \{m_0,m_1', p_0, p_1\}, \{\overline{\mu}_\mathbf{x}, \ell\}).
 \]
of order $n+1$, and $\pi(\overline{M}_n) = \overline{\pi}$.

(2) Let $\overline{\pi}$ be meandric and $M_{n+1} = (\mathcal{D}, \{m_0,m_1, p_0, p_1\}, \{\mu, \ell\})$ be the corresponding meander with $\pi(M_{n+1}) = \overline{\pi}$. 

It is clear that $\ell$ divides the disk $\mathcal{D}$ into two parts, say $A$ and $B$, and we put $m_0 \in A$ \textit{i.e.,} $A$ is above $\ell$. As before, we have a Jordan curve $\vartheta: = \mu \cup \arc{m_0m_1}$ which divides the plane into two regions, say $O$ and $I$, and we set $p_1 \in O$.

Let $0 < t_1 < \ldots < t_{n+1} <1$ be such that $\mu \pitchfork \ell = \{\mu(t_1), \ldots, \mu(t_{n+1})\}$. By the form of $\overline{\pi}$, $\mu(t_{n+1})$ is the rightmost intersection point. Take a closed ball $B(\mu(t_{n+1}), r)$ centred at $\mu(t_{n+1})$ and with a small enough radius $r>0$, and $B(\mu(t_{n+1}),r)\cap \mu = \{a,b\}$ and we assume that $a$ is above $\ell$, $b$ is below $\ell.$ Let $b = \mu(t_n')$, where $t_n < t_n' <t_{n+1}$. Consider a smooth proper embedding $\gamma: [t_n',1] \to O \cap B$ where $\gamma(t_n') = \mu(t_n')$, $\gamma(1) \in \partial \mathcal{D}$ and set 
\[
 \mu''(t) : = \begin{cases}
     \mu(t) & 0 \le t \le t_n', \\
     \gamma(t), & t_n' \le t \le 1.
 \end{cases}
\]

Finally, using Whitney Approximation Theorem for maps to $\mu''$, we thus get a smooth proper embedding $\check{\mu}:[0,1] \to \mathbb{R}^2$ which intersect $\ell$ transversally in $n$ points. Thus we get a meander of order $n$
\[
 M_n := (\mathcal{D}, \{m_0,\gamma(1), p_0, p_1\}, \{\check{\mu}, \ell\}),
\]
such that $\pi(M_n) = \pi$. This completes the proof.

\end{proof}

\section{Gauss Diagrams and its Realization}

The main tool of this paper is the Gauss diagram technique. Thus, we start with the corresponding notions and results which frequently will be used.

\begin{definition}
 By a \textit{generic plane curve,} $\gamma: S^1 \to \mathbb{R}^2$ we mean an immersion of an (oriented) circle $S^1$ into a plane $\mathbb{R}^2$ having only transversal double points of self-intersection.
\end{definition}

It is clear that a generic plane curve is determined (up to diffeomorphism) by its sequence in which the double points appear as we go around the curve.

To formalize this concept, we introduce the following definitions.

\begin{definition}[\textbf{Double occurrence word}] A \textit{word} is a sequence of characters (=letters) of an alphabet. A \textit{cyclic word} is a class of words in the quotient space given by the ``being cyclically equivalent'' relation, $u \sim v$,  if the following two conditions hold:
\begin{enumerate}
    \item they have the same length, $|u| = |v|  =n$,
    \item there exists $1\le i \le n$ such that for any $1\le j \le n$, $u_{i+j (\bmod{n})} = v_{j (\bmod{n})}$, here $u_i$ means the $i$th letter in $u$.
\end{enumerate}

A \textit{double occurrence word} is a finite cyclic word in which every letter appearing in the word appears exactly twice.
\end{definition}

\begin{definition}[\textbf{Gauss code}] 
 A double occurrence word $w$ is said to be a \textit{Gauss code}, or a \textit{realizable word} if there exists a generic plane curve $\gamma:S^1 \to \mathbb{R}^2$, with finitely many transversely self-intersections with the following property. There is an assignment of the letters of $w$ to the crossing points of $\gamma$ such that, traversing $\gamma$ in a certain direction, traverses the letters in the cyclic order ad in $w.$
\end{definition}

It is convenient to encode these sequences in the following way:

\begin{construction}[\textbf{Gauss Diagram}]\label{Gauss_daigram}
    For a given generic plane curve $\gamma: S^1 \to \mathbb{R}^2$ with $n$ double points, we move around the circle $S^1$ and mark all the points that are mapped to a double point, and then join each pair of marked points mapped to the same double point by a chord. What we obtain is called a \textit{chord diagram} or \textit{Gauss diagram} of order $n$ of the curve and denoted by $\mathfrak{G}(\gamma)$ (see Fig.\ref{exofgauss} a), b)).
\end{construction}

The two Gauss diagrams whose sets of chords differ only by an orientation-preserving diffeomorphism of $S^1$ are considered equivalent and are not distinguished.

\begin{figure}[h!]
  \begin{center}
\begin{tikzpicture}[scale = 3]
 \draw [line width = 2, name path= a] (0.1,0.8) to [out = 0, in = 180] (0.6,0.2);
 \draw[line width =2, name path= b](0.6,0.2) to [out = 0, in = 270] (1.1,0.6);
 \draw[line width =2, name path= c] (1.1,0.6)to [out = 90, in =0] (0.6, 1);
 \draw[line width =2, name path= d](0.6,1) to [out = 180, in = 90] (0.3,0.4);
 \draw[line width =2, name path= e] (0.3,0.4)to [out= 270, in = 180] (0.7,-0.2);
 \draw[line width =2, name path= f](0.7,-0.2)to [out= 0, in = 270] (1.1,0.2);
 \draw[line width =2, name path= g](1.1,0.2)to [out = 90, in =270] (0.5, 1);
 \draw[line width =2, name path= h] (0.5,1)to [out = 90, in = 180] (0.7, 1.3);
 \draw[line width =2, name path= k](0.7,1.3) to [out= 0, in = 90] (1,0.9);
 \draw[line width =2, name path= l](1,0.9)to [out= 270, in = 60] (0.3, 0);
 \draw[line width =2, name path= m](0.3,0) to [out = 240, in = 180] (0.1,0.8);
 \fill [name intersections={of=a and d, by={1}}]
(1) circle (1pt) node[left] {$1$};
 \fill [name intersections={of=a and l, by={2}}]
(2) circle (1pt) node[above =1mm of 2] {$2$};
 \fill [name intersections={of=b and g, by={3}}]
(3) circle (1pt) node[right = 1mm of 3] {$3$};
 \fill [name intersections={of=c and l, by={4}}]
(4) circle (1pt) node[above right] {$4$};
 \fill [name intersections={of=d and g, by={5}}]
(5) circle (1pt) node[above left] {$5$};
 \fill [name intersections={of=e and l, by={6}}]
(6) circle (1pt) node[left = 1mm of 6] {$6$};
 \fill [name intersections={of=g and l, by={7}}]
(7) circle (1pt) node[above = 1mm of 7] {$7$};
 \begin{scope}[scale = 0.3, xshift = 8cm, yshift = 1.6cm, line width=2]
   \draw (0,0) circle (2);
    \coordinate (1) at (90:2);
    \node at (1) [above =1mm of 1] {$1$};
    \fill (1) circle(2pt);
    \coordinate (2) at (115:2);
    \node at (2) [above =1mm of 2] {$2$};
    \fill (2) circle(2pt);
    \coordinate (3) at (140:2);
    \node at (3) [above left = -1mm of 3] {$3$};
    \fill (3) circle(2pt);
    \coordinate (4) at (165:2);
    \node at (4) [left] {$4$};
    \fill (4) circle(2pt);
    \coordinate (5) at (190:2);
    \node at (5) [left] {$5$};
    \fill (5) circle(2pt);
    \coordinate (6) at (215:2);
    \node at (6) [left] {$1$};
    \fill (6) circle(2pt);
    \coordinate (7) at (240:2);
    \node at (7) [below] {$6$};
    \fill (7) circle(2pt);
    \coordinate (8) at (265:2);
    \node at (8) [below] {$3$};
    \fill (8) circle(2pt);
    \coordinate (9) at (290:2);
    \node at (9) [below] {$7$};
    \fill (9) circle(2pt);
    \coordinate (10) at (315:2);
    \node at (10) [below right] {$5$};
    \fill (10) circle(2pt);
    \coordinate (11) at (340:2);
    \node at (11) [below right] {$4$};
    \fill (11) circle(2pt);
    \coordinate (12) at (5:2);
    \node at (12) [right] {$7$};
    \fill (12) circle(2pt);
    \coordinate (13) at (30:2);
    \node at (13) [right] {$2$};
    \fill (13) circle(2pt);
    \coordinate (14) at (60:2);
    \node at (14) [above right] {$6$};
    \fill (14) circle(2pt);
    \draw[line width =1] (1) -- (6);
    \draw[line width =1] (3) -- (8);
    \draw[line width =1] (5) -- (10);
    \draw[line width =1] (4) -- (11);
    \draw[line width =1] (9) -- (12);
    \draw[line width =1] (2) -- (13);
    \draw[line width =1] (7) -- (14);
   \end{scope}
   \begin{scope}[scale = 0.3, xshift = 14cm, yshift = 1.6cm]
            {\foreach \angle/ \label in
       { 90/1, 40/2, 350/3, 300/4, 250/5, 200/6, 140/7
        }
     {
        \fill(\angle:2.5) node{$\label$};
        \fill(\angle:2) circle (3pt) ;
      }
      }
      \draw(90:2) -- (40:2);
      \draw(90:2) -- (300:2);
      \draw(90:2) -- (250:2);
      \draw(90:2) -- (350:2);

      \draw(40:2) -- (200:2);

      \draw(350:2) -- (300:2);
      \draw(350:2) -- (250:2);
      \draw(350:2) -- (200:2);

      \draw(300:2) -- (200:2);
      \draw(300:2) -- (140:2);

      \draw(250:2) -- (200:2);
      \draw(250:2) -- (140:2);   
   \end{scope}
  \fill(0.5,-0.5) node{$a)$};
  \fill(2.5,-0.5) node{$b)$};
  \fill(4.2,-0.5) node{$c)$};
\end{tikzpicture}
\end{center}
\caption{Example of a) a planar curve with Gauss code $\mathsf{12345163754726}$; b) its Gauss diagram and c) its interlacement graph.}\label{exofgauss}
\end{figure}

\begin{definition}
 For any Gauss diagram $\mathfrak{G}(\gamma)$ of a generic plane curve $\gamma$ we associate a graph, which is called \textit{chord-intersection graph} or \textit{interlacement graph}, $\Gamma(\mathfrak{G}(\gamma)) := (V,E)$ where the set of its vertices is exactly the set of all chords of $\mathfrak{G}$, and a pair of two vertices, say, $(v,u)$ is an edge if and only if the chords corresponding to them are intersected (see Fig. \ref{exofgauss} c) for example).  
\end{definition}

This graph is also called a \emph{circle graph} by graph theorists or a \emph{chord interlacement graph} by knot theorists. 

\begin{definition}
Recall that with any graph $\Gamma = (V,E)$ one can associate its \textit{adjacency matrix} $\mathbf{M}(\Gamma)$ with rows and columns labelled by graph vertices, with a $1$ or $0$ in position $(v_i,v_j)$ according to whether $v_i$ and $v_j$ are adjacent or not, and $0$ in all positions of form $(v_i,v_i)$.    
\end{definition}

For a given Gauss diagram $\mathfrak{G}(\gamma)$, we may construct the curve $\gamma$ as follows

\begin{construction}\label{curve_from_diagram}
 Consider a point that traces a curve in $\mathbb{R}^2$ and mark the future double points on it in their order of appearance on the Gauss diagram; whenever a double point whose partner was already marked is to appear, direct the curve to the partner and make a transversal self-intersection there; continue in this way until run out of double points, then take the curve back to the initial point.
\end{construction}

If a Gauss diagram $\mathfrak{G}$ contains a chord $\mathfrak{c}$, then we write $\mathfrak{c}\in\mathfrak{G}$. We denote by $\mathfrak{c}_0$, $\mathfrak{c}_1$ the endpoints of the chord $\mathfrak{c}\in \mathfrak{G}$. We shall also consider every chord $\mathfrak{c} \in \mathfrak{G}$ together with one of two arcs between its endpoints, and the chosen arc is denoted by $\mathfrak{c}_0\mathfrak{c}_1$.

Further, $\mathfrak{c}_\times$ denotes the set of all chords crossing the chord $\mathfrak{c}$ and $\mathfrak{c}_\parallel$ denotes the set of all chords not crossing the chord $\mathfrak{c}$. We put $\mathfrak{c} \not \in \mathfrak{c}_\times$, and $\mathfrak{c} \in \mathfrak{c}_\parallel$.

Let $\mathfrak{G}$ be a Gauss diagram, and let $\mathfrak{a}$ be a chord of $\mathfrak{G}$. \textit{A $C$-contour}, denoted by $C[\mathfrak{a}]$, consists of the chord $\mathfrak{a}$, a chosen arc $\mathfrak{a}_0\mathfrak{a}_1$, and all chords of $\mathfrak{G}$ such that all their endpoints lie on the arc $\mathfrak{a}_0\mathfrak{a}_1$.

Next, let us consider a plane curve $\gamma:S^1 \to \mathbb{R}^2$, and let $\mathfrak{G}$ be its Gauss diagram. Every chord $\mathfrak{c \in G}$ corresponds to a crossing $c$ of $\gamma$. Thus, to every $C$-contour $C[\mathfrak{c}]$, we can associate a closed path $\gamma[c]$ along the curve $\gamma$. We call $\gamma[c]$ \textit{the loop of the curve} $\gamma$.

\begin{lemma}\label{intersections=chords}
    There is a one-to-one correspondence between self-intersection points of $\gamma[c]$ and all chords from $C[\mathfrak{c}]$.
\end{lemma}
\begin{proof}
    It immediately follows from Construction \ref{curve_from_diagram}.
\end{proof}

\begin{definition}
 A Gauss diagram $\mathfrak{G}$ is called \textit{realizable} if there is a generic plane curve $\gamma:S^1 \to \mathbb{R}^2$ such that $\mathfrak{G} = \mathfrak{G}(\gamma)$.
\end{definition}

\begin{remark}
 In the opposite case, they say that \textit{a Gauss diagram is not realizable}. However, it can be considered as a Gauss diagram of a curve on some surface. This leads to the notation of a genus of a Gauss diagram \cite{M} and virtual knots. It allows us to consider Gauss diagrams corresponding to any double occurrence word.  
\end{remark}

The problem concerning which Gauss diagrams can be realized by knots is an old one and has been solved in several ways. For our purposes, we present results obtained by B. Shtylla, L. Traldi, and L. Zulli in \cite{STZ09}.

\begin{theorem}[{\cite[Theorem 2]{STZ09}}]\label{STZ}
 Let $\mathfrak{G}$ be Gauss diagram, $\Gamma(\mathfrak{G})$ its interlacement graph and $\mathbf{M}(\mathfrak{G})$ its adjacency matrix. Then $\mathfrak{G}$ is realizable if and only if there exists a diagonal matrix $\mathbf{D}$ such that $\mathbf{M}+\mathbf{D}$ is idempotent over the field $\mathsf{GF}(2)$.
\end{theorem}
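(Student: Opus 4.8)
The plan is to route both implications through the closed surface(s) that the standard Gauss-code construction attaches to $\mathfrak{G}$. From the double occurrence word underlying $\mathfrak{G}$, with chords (double points) $1,\dots,n$, one forms the abstract $4$-valent graph $G$: a vertex for each chord, an edge for each of the $2n$ arcs into which the endpoints cut $S^1$, and at each vertex the strand pairing recording which pairs of edge-ends belong to the same passage of the prospective curve. For each vector $d=(d_1,\dots,d_n)\in\mathsf{GF}(2)^n$ --- a binary choice at every double point --- the construction yields a closed surface $\Sigma_d$ into which $G$ embeds cellularly, and this $\mathsf{GF}(2)^n$ of choices is the geometric source of the diagonal $D$ appearing in the statement. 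Since realizability in $\mathbb{R}^2$ is the same as realizability in $S^2$, and a realization of minimal genus is of this form, $\mathfrak{G}$ is realizable if and only if $\Sigma_d\cong S^2$ for some $d$; an Euler characteristic count (I would set up the bookkeeping of the vertex regions carefully, this being the only fiddly point) turns $\chi(\Sigma_d)=2$ into a purely combinatorial condition: for some $d$, the number $|C(d)|$ of boundary circuits involved in building $\Sigma_d$ must be as large as possible.

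On the algebraic side, because $M$ is symmetric with zero diagonal and $D$ is diagonal, ``$M+D$ idempotent over $\mathsf{GF}(2)$'' means $(M+D)^2=M+D$, so $M+D$ represents an orthogonal projection of $\mathsf{GF}(2)^n$ and in particular $\operatorname{rank}(M+D)=\operatorname{tr}(M+D)=\sum_i d_i$. The bridge to the combinatorics is the classical Cohn--Lempel / Lov\'asz circuit-counting identity, $\operatorname{rank}_{\mathsf{GF}(2)}\bigl(M+\operatorname{diag}(d)\bigr)=n+c-|C(d)|$ for a framework-determined constant $c$, so that maximizing the number of circuits is exactly minimizing the $\mathsf{GF}(2)$-rank of $M+\operatorname{diag}(d)$. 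Thus, modulo pinning down $c$, genus-$0$ realizability is equivalent to ``$M+\operatorname{diag}(d)$ has minimal rank for some $d$'', and the theorem asserts the apparently stronger statement that this is in turn equivalent to ``$M+\operatorname{diag}(d)$ is idempotent for some $d$''.

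For ($\Rightarrow$): given a realization $\gamma$, checkerboard two-color the complementary regions of $\gamma(S^1)$ in the plane (possible since $G$ is Eulerian), and let $d_i\in\{0,1\}$ record how the local coloring sits relative to the two strands at double point $i$. The induced transition system is the ``Seifert state adapted to the coloring'': its circuits are \emph{disjoint} Jordan curves in $S^2$, so $\Sigma_d\cong S^2$ and, by the rank identity, $\operatorname{rank}(M+\operatorname{diag}(d))=\operatorname{tr}(M+\operatorname{diag}(d))$. To upgrade this to genuine idempotency I would identify $M+\operatorname{diag}(d)$ with the $\mathsf{GF}(2)$ matrix naturally attached to this nested family of Jordan curves and compute its square directly: the $(i,j)$ entry of $(M+\operatorname{diag}(d))^2$ counts, mod $2$, the length-two ``interlacement walks'' from double point $i$ to double point $j$ through an intermediate double point, and planarity --- the nesting of the state circles --- forces this count to agree with $(M+\operatorname{diag}(d))_{ij}$. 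For ($\Leftarrow$): if $M+\operatorname{diag}(d)$ is idempotent then $\operatorname{rank}=\operatorname{tr}$, which via the circuit identity forces $|C(d)|$ to be maximal, hence $\Sigma_d\cong S^2$; undoing the transition $d$ back to the strand pairing inside this sphere embedding recovers a generic plane curve $\gamma$ with $\mathfrak{G}(\gamma)=\mathfrak{G}$, as in Construction \ref{curve_from_diagram}.

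The main obstacle is exactly the gap between a rank bound and genuine idempotency. The circuit-counting identity only sees $\operatorname{rank}(M+\operatorname{diag}(d))$, so on its own it characterizes realizability merely on a minimal-genus surface; the real work is (i) showing that the geometric $d$ coming from the checkerboard coloring makes $M+\operatorname{diag}(d)$ not just low-rank but idempotent --- the ``extra planarity'' content, for which I expect the honest mod-$2$ square computation above to be the heart of the matter --- and, symmetrically, (ii) showing on the converse side that the mere existence of \emph{some} diagonal $D$ with $M+D$ idempotent, with no a priori geometric meaning, already forces the combinatorial optimum. I anticipate needing the full strength of the Cohn--Lempel identity together with the structure of symmetric matrices over $\mathsf{GF}(2)$ --- an idempotent symmetric $\mathsf{GF}(2)$-matrix is congruent to $\operatorname{diag}(1,\dots,1,0,\dots,0)$, hence pinned down by its rank, which must equal its trace --- this being the algebraic signature that isolates the planar case among all the surfaces $\Sigma_d$.
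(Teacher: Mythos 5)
The paper does not prove this statement at all: it is quoted verbatim as \cite[Theorem 2]{STZ09} and used as a black box, so there is no internal proof to compare against. Your toolkit (the $2^n$ ribbon surfaces $\Sigma_d$, the Cohn--Lempel circuit-nullity identity, and the Jordan-curve/mod-$2$ intersection computation for the square of the trip matrix) is the historically correct one for this circle of ideas, but the sketch has a genuine error at the exact point you yourself identify as the crux.

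The error is the claim that idempotency of $M+D$ over $\mathsf{GF}(2)$ forces $\operatorname{rank}(M+D)=\sum_i d_i$. Over a field of characteristic $2$ the trace of an idempotent only determines its rank modulo $2$, not as an integer, and the discrepancy occurs already in the first nontrivial realizable example: for the trefoil shadow $123123$ the interlacement matrix $M$ is the $3\times 3$ all-ones-off-diagonal matrix, $D=I_3$ gives $M+D=J_3$ (all ones), which satisfies $J_3^2=3J_3=J_3$, yet $\operatorname{rank}(J_3)=1$ while $\sum_i d_i=3$. Likewise your closing assertion that a symmetric idempotent over $\mathsf{GF}(2)$ is congruent to $\operatorname{diag}(1,\dots,1,0,\dots,0)$ and hence ``pinned down by its rank, which must equal its trace'' fails for the same matrix (and congruence does not preserve idempotency anyway). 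Since your $(\Leftarrow)$ direction runs entirely through ``idempotent $\Rightarrow$ rank $=$ trace $\Rightarrow$ $|C(d)|$ maximal $\Rightarrow$ genus $0$'', that implication collapses: the circuit-nullity identity needs the actual integer rank, and idempotency does not supply it. The $(\Rightarrow)$ direction is closer to a real argument (it is essentially Zulli's Jordan-curve computation of $\langle m_i,m_j\rangle$ as a mod-$2$ intersection number of the two loops based at the double points), but as written the choice of $d$ from a checkerboard colouring and the ``planarity forces the count to agree'' step are asserted rather than carried out, and the unspecified constant $c$ in the circuit identity is exactly where the bookkeeping you defer would have to be done. As it stands the proposal does not establish the theorem in either direction.
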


\section{Thurston Configurations}

We are going to correspond a Gauss diagram for any closed meander. To do so, we need some notations from the braid group theory.

We start with an example. Take a permutation, say, $\pi = \begin{pmatrix}
    1 & 2 & 3 & 4 \\3 & 4& 2 & 1
\end{pmatrix}$, it can be visualized as follows (see Fig.\ref{perm}).
\begin{figure}[h!]
    \centering
     \begin{tikzpicture}[scale=0.5]
      {\foreach \x in
         { 1,2,3,4
        }
         {
        \fill(\x,0) node[above] {$\x$};
        \fill(\x,0) circle (2pt) ;
        \fill(\x,-4) node[below] {$\x$};
        \fill(\x,-4) circle (2pt) ;
        }
       }
       \draw (1,0) -- (3,-4);
       \draw (2,0) -- (4,-4);
       \draw (3,0) -- (2,-4);
       \draw (4,0) -- (1,-4);
    \end{tikzpicture}
    \caption{A visualization of the permutation $\pi$, where any intersection point corresponds to an involution of the perumutation.}\label{perm}
\end{figure}

Let us denote by $I_i$ an interval with endpoints $(i, \pi(i))$; where the left endpoint corresponds to the top number of the point on the Figure. We then see that $I_i \cap I_j \ne \varnothing$ if and only if $i<j$ and $\pi(i)>\pi(j)$, and $I_i \cap I_j = \varnothing$ if and only if $i<j$ and $\pi(i) < \pi(j).$ In other words, any inversion of a permutation can be considered as an intersection of the intervals.

\begin{definition}{\cite[9.1]{EpThur}}
For any permutation, $\pi \in \mathfrak{S}_n$ we consider the following set of pairs $R(\pi) \subseteq \{1,\ldots, n\} \times \{1,\ldots, n \}$,
\[
 R(\pi):=\{(i,j)\, : \, i < j, \,  \pi(i) > \pi(j)\}.
\]    
\end{definition}

For instance, for the permutation above, we have $R(\pi) = \{(1,3), (1,4), (2,3), (2,4), (3,4)\}$.  It is also worth to note that each pair of $R(\pi)$ correspondences to the pair of crossing strands, numbering by numbers on the top in Fig.\ref{perm}. A formalization of this idea is one of the aims of this section. 

\begin{lemma}\cite[Lemma 9.1.6]{EpThur}\label{criteria}
A set $R \subseteq \{1,\ldots, n\} \times \{1,\ldots, n\}$ of pairs $(i,j)$, with $i <j$, comes from some permutation $\pi \in \mathfrak{S}_n$ if and only if the following two conditions are satisfied:
\begin{enumerate}
    \item If $(i,j) \in R$ and $(j,k) \in R$, then $(i,k) \in R$.
    \item If $(i,k) \in R$, then $(i,j) \in R$ or $(j,k) \in R$ for every $j$ with $i<j<k$.
\end{enumerate}
\end{lemma}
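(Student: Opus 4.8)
The plan is to prove the two implications separately: the forward implication is a direct unwinding of the definition of $R(\pi)$, while the converse amounts to reconstructing $\pi$ from $R$ as the permutation underlying a suitable total order.

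For the ``only if'' direction, assume $R=R(\pi)$. Condition (1) is immediate: if $(i,j),(j,k)\in R$ then $i<j<k$ and $\pi(i)>\pi(j)>\pi(k)$, so $i<k$ and $\pi(i)>\pi(k)$, i.e. $(i,k)\in R$. For condition (2), suppose $(i,k)\in R$ and $i<j<k$. Since $\pi$ is injective, either $\pi(j)<\pi(i)$, and then $(i,j)\in R$, or $\pi(j)>\pi(i)>\pi(k)$, and then $(j,k)\in R$; in either case the required disjunction holds.

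For the ``if'' direction, I would encode $R$ as a tournament on $\{1,\dots,n\}$: for each pair $\{a,b\}$ with $a<b$, orient the edge $a\to b$ when $(a,b)\notin R$ and $b\to a$ when $(a,b)\in R$. The key claim is that this tournament has no directed $3$-cycle. Since a tournament with no directed triangle is transitive (if $a\to b$ and $b\to c$ then $c\to a$ would close a triangle, so $a\to c$), and a transitive tournament is exactly a strict total order $<_R$ on $\{1,\dots,n\}$, the claim will suffice. To verify it, fix indices $x<y<z$; a directed triangle among them is one of the two cyclic orientations $x\to y\to z\to x$ or $x\to z\to y\to x$. The first translates into $(x,y)\notin R$, $(y,z)\notin R$, $(x,z)\in R$, which is excluded by condition (2) applied with $i=x$, $j=y$, $k=z$; the second translates into $(x,y)\in R$, $(y,z)\in R$, $(x,z)\notin R$, which is excluded by condition (1) with the same indices. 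Given the total order $<_R$, I would set $\pi(i):=\lvert\{k:k\le_R i\}\rvert$, which is a bijection of $\{1,\dots,n\}$, and check that for $i<j$ one has $(i,j)\in R(\pi)\iff\pi(i)>\pi(j)\iff j<_R i\iff (i,j)\in R$, so $R(\pi)=R$.

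The heart of the argument — and essentially the only subtle point — is the observation that conditions (1) and (2) are precisely what is needed to rule out the two possible orientations of a cyclic triangle, after which the classical fact that a triangle-free tournament is a linear order finishes everything. The only thing requiring care is bookkeeping the correspondence between membership in $R$, the edge-orientation convention, and the direction of the inequality $\pi(i)>\pi(j)$ versus $\pi(i)<\pi(j)$, since an inversion of the convention would yield the inverse permutation or the complementary set $\neg R(\pi)$ instead of $R(\pi)$.
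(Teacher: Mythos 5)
Your proof is correct. Note that the paper itself gives no proof of this lemma --- it is quoted verbatim from \cite[Lemma 9.1.6]{EpThur} --- so there is no in-paper argument to compare against; your write-up supplies a valid self-contained proof. The forward direction is the routine unwinding of $R(\pi)=\{(i,j):i<j,\ \pi(i)>\pi(j)\}$, and the converse via the triangle-free tournament (conditions (1) and (2) killing the two cyclic orientations of a triple $x<y<z$, hence a transitive tournament, hence a linear order whose rank function is the desired $\pi$) is sound, including the final verification that $(i,j)\in R(\pi)\iff(i,j)\in R$ under your orientation convention.
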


\begin{remark}\label{useful_for_R}
    Since any subset $R\subseteq \{1,\ldots, n\} \times \{1,\ldots, n\}$ is a binary relation on $\{1,\ldots, n\}$, then for a given $R(\pi)$ we get a graph $\Gamma(R(\pi))$ with adjacency matrix $M(R(\pi))$. It is clear that $M(R(\pi))$ is exactly the matrix representation of the $R(\pi)$. By the construction of $R(\pi)$, $M(R(\pi))$ is a symmetric matrix with zero entries in the main diagonal and all other entries are either $1$ or $0$. This allows us to consider such matrices as elements of the space of matrices over the field $\mathsf{GF(2)}$. 
\end{remark}

\begin{remark}\label{R=braids}
 The set of elements of form $R(\pi)$ is called \textit{Thurston generators} of the braid groups. They are braids with positive crossings, and any two strands cross at most once. The elements of $R(\pi)$ corresponds to crossing of strings, \textit{i.e.,} if $(i,j) \in R(\pi)$ then $i$th and $j$th strands are crossed. But for our purposes, it is enough to interpret these sets as ``shadows'' of braids, \textit{i.e.,} we consider them on the plane, and hence instead of overlapping strands, we consider just crossing them (see an explanation below).
\end{remark}

\begin{definition}(\textbf{Thurston Configurations})\label{shadows}
    For a given permutation $\pi \in \mathfrak{S}_n$ we consider the following configuration of $n$ lines on the plane $\mathbb{R}^2$ labelled with numbers $1,\ldots, n$ (denote them by $\ell_1,\ldots, \ell_n$ respectively); lines $\ell_i$, $\ell_j$ are transversally intersected (\textit{resp.} not intersected) if and only if $(i,j) \in R(\pi)$ (\textit{resp.} $(i,j) \notin R(\pi))$. Such configuration (and its diffeomorphic image) for a given permutation, we call \textit{Thurston configuration of the permutation} $\pi.$ 
\end{definition}

\begin{remark}
    This definition is well-defined because of Lemma \ref{criteria} and \cite[Lemma 9.1.10]{EpThur}.
\end{remark}

\textbf{We consider permutations as the corresponding bijections, thus, the product of two permutations is defined as their composition as functions, so ${\displaystyle \sigma \cdot \pi }$ is the function that maps any element $x$ of the set to $
\sigma (\pi (x))$.}

\begin{lemma}[Thurston's formulas]\label{Thuston_formulas}
For any $\tau,\sigma \in \mathfrak{S}_n$
\[
R(\sigma\tau) = \left( \tau^{-1}R(\sigma)\setminus R(\tau) \right) \cup \left(R(\tau) \setminus \tau^{-1}R(\sigma) \right),
\]
where the image of a pair under a permutation is defined by taking the image of each component and reordering, if necessary, so the smaller number comes first.
\end{lemma}
\begin{proof}

(1) Let $(i,j) \in R(\sigma \tau)$ then $i<j$ and $\sigma \tau (i) > \sigma \tau (j)$.  
 \begin{itemize}
     \item[i)] If $\tau(i) > \tau(j)$ then $(i,j) \in R(\tau)$ and $(\tau(i), \tau(j)) \notin R(\sigma)$, hence $(i,j) \notin \tau^{-1}R(\sigma)$.
     \item[ii)] If $\tau(i) < \tau(j)$ then $(i,j) \notin R(\tau)$ and $(\tau(i), \tau(j) \in R(\sigma)$, hence $(i,j) \in \tau^{-1}R(\sigma)$.
 \end{itemize}

It follows that if $(i,j) \in R(\sigma \tau)$ then $(i,j) \in R(\tau) \cup \tau^{-1} R(\sigma) \setminus  R(\tau) \cap \tau^{-1} R(\sigma) $, \textit{i.e.,} $R(\sigma \tau) \subseteq \left( \tau^{-1}R(\sigma)\setminus R(\tau) \right) \cup \left(R(\tau) \setminus \tau^{-1}R(\sigma) \right)$.

(2) Let $(i,j) \in  \left(R(\tau) \setminus \tau^{-1}R(\sigma) \right) \cup \left( \tau^{-1}R(\sigma) \setminus R(\tau)\right)$.
 \begin{itemize}
     \item[i)] If $(i,j) \in R(\tau) \setminus \tau^{-1}R(\sigma)$ then $\tau(i) > \tau(j)$ and $(i,j) \notin \tau^{-1} R(\sigma)$, hence $\sigma\tau(i)> \sigma\tau(j)$, thus $(i,j) \in R(\sigma \tau)$.
     \item[ii)] If $(i,j) \in \tau^{-1}R(\sigma) \setminus R(\tau)$ then $\tau(i) < \tau(j)$ and $(\sigma \tau(i), \sigma\tau (j)) \in R(\sigma \tau)$, therefore $(i,j) \in R(\sigma \tau).$
 \end{itemize}

It follows that, $\left( \tau^{-1}R(\sigma)\setminus R(\tau) \right) \cup \left(R(\tau) \setminus \tau^{-1}R(\sigma) \right) \subseteq R(\sigma \tau)$ and this completes the proof.
\end{proof}

\begin{corollary}\label{R(pi^{-1})} $R(\pi^{-1})  = \pi R(\pi),$ for any $\pi \in \mathfrak{S}_n.$
\end{corollary}
\begin{proof}
    Let $\sigma = \pi$, $\tau = \pi^{-1}$, then
    \[
    R(\pi \pi^{-1}) = \left( \pi R(\pi)\setminus R(\pi^{-1}) \right) \cup \left(R(\pi^{-1}) \setminus \pi R(\pi) \right)
    \]
   any by $R(\pi \pi^{-1}) = \varnothing$ the statement follows.
\end{proof}

\begin{definition}
    Set 
    \[
\omega = \begin{pmatrix}
    1 & 2 & \ldots & n-1 & n \\
    n & n-1 & \ldots & 2 & 1
\end{pmatrix}.
\]

The element $R(\omega)$ (see \cite[Section 9.1, (9.1.3)]{EpThur}) is called \textit{the Garside element (braid)} and denoted by $\Delta_n$.
\end{definition}

It is easy to see that, $\Delta_n = \{(i,j) \, : \, 1\le i<j \le n\}$ and $\pi \Delta_n = \Delta_n$ for any $\pi \in \mathfrak{S}_n$. Then by Lemma\ref{Thuston_formulas},
\begin{eqnarray*}
    R(\omega \pi) &=& \left( \pi^{-1} \Delta_n \setminus R(\pi) \right) \cup \left( R(\pi) \setminus \pi^{-1}\Delta_n \right) \\
    &=& \left( \Delta_n \setminus R(\pi) \right) \cup \left( R(\pi) \setminus \Delta_n \right) \\
    &=& \left( \Delta_n \setminus R(\pi) \right) \cup \varnothing \\
    &=&\Delta_n \setminus R(\pi).
\end{eqnarray*}

Set
\begin{equation}\label{Thurston}
    \neg R(\pi) := R(\omega \pi) =  \Delta_n \setminus R(\pi),
\end{equation}
for any $\pi \in \mathfrak{S}_n$.

\begin{remark}\label{negR(mu)}
 It is cleat that for a given $\pi \in \mathfrak{S}_n$ the set $\neg R(\pi)$ can be also described as the following set of pair $(i,j)$, $1\le i,j \le n$,
\[
 \neg R(\pi) = \{(i,j)\, | \, i < j, \, \pi(i) < \pi(j)\}.
\]    
\end{remark}

\begin{figure}[h!]
    \centering
     \begin{tikzpicture}[scale=0.5]
      \begin{scope}[,xshift=-8cm]
      {\foreach \x in
         { 1,2,3,4,5,6
        }
         {
        \fill(\x,0) node[above] {$\x$};
        \fill(\x,0) circle (2pt) ;
        \fill(\x,-6) node[below] {$\x$};
        \fill(\x,-6) circle (2pt) ;
        }
       }
       \draw (1,0) to [out = 270, in =90] (1,-6);

       \draw (4,0) to [out = 270, in =90] (2,-6);
       \draw (5,0) to [out = 270, in =90] (5,-6);
       \draw (6,0) to [out = 270, in =90] (6,-6);
       
       \draw[line width=4, white] (3,0) to [out = 270, in =90] (2,-2) to [out = 270, in = 90] (3,-6);
       \draw (3,0) to [out = 270, in =90] (2,-2) to [out = 270, in = 90] (3,-6);
       
      \draw (2,0) to [out = 270, in =90] (4,-6);
      \end{scope}
      \begin{scope}[xshift=0cm]
       {\foreach \x in
         { 1,2,3,4,5,6
        }
         {
        \fill(\x,0) node[above] {$\x$};
        \fill(\x,0) circle (2pt) ;
        \fill(\x,-6) node[below] {$\x$};
        \fill(\x,-6) circle (2pt) ;
        }
       }
       \draw (6,0) to [out = 270, in = 90] (1,-6);

       \draw (5,0) to [out = 270, in =90] (6,-2) to [out = 270, in =90] (2,-6);
        
       \draw (4,0) to [out = 270, in =90] (3,-2) to [out = 270, in =90] (5,-6);

        \draw (3,0) to [out = 270, in =90] (2,-2) to [out=270, in =90] (4,-6);
        
         \draw (2,0) to [out = 270, in =90] (1,-2) to [out = 270, in =90] (3,-6);
        
             \draw (1,0) to [out = 270, in =180] (3,-2) to [out =0, in =90] (6,-6);
        
    \end{scope}
    \begin{scope}[xshift=8cm]
     {\foreach \x in
         { 1,2,3,4,5,6
        }
         {
        \fill(\x,0) node[above] {$\x$};
        \draw(\x,0) circle (2pt) ;
        \fill[white](\x,0) circle (2pt) ;
        
        \fill(\x,-6) node[below] {$\x$};
        \fill(\x,-6) circle (2pt) ;
        }
       }
      \draw (6,0) to [out = 270, in =90] (1,-6);

        \draw (5,0) to [out = 270, in =90] (1,-4.5) to [out = 270, in =90] (2,-6);
      
      \draw (4,0) to [out = 270, in =90] (1,-3.5) to [out = 270, in =90] (3,-6);

      \draw (3,0) to [out = 270, in =90] (1,-2.5) to [out = 270, in =90] (4,-6);

      \draw (2,0) to [out = 270, in =90] (1,-1.5) to [out = 270, in =90] (5,-6);
      
         \draw (1,0) to [out = 270, in =90] (6,-6);
    \end{scope}
     \fill(-5,-8.5) node{$a)$};
     \fill(3.5,-8.5) node{$b)$};
     \fill(12,-8.5) node{$c)$};
  \end{tikzpicture}
  \caption{For the permutation $\pi = \begin{pmatrix}
    1 &2 &3 & 4 & 5 & 6 \\
    1 & 4 & 3 & 2 & 5 & 6 \end{pmatrix}$ we have: a) $R(\pi)$, b) $\neg R(\pi) = R(\omega \pi)$, and c) the Garside element correspondences to the set $\Delta_6.$}\label{braids}
\end{figure}

\begin{example}\label{ex}
Let $\pi =  \begin{pmatrix}
    1 &2 &3 & 4 & 5 & 6 \\
    1 & 4 & 3 & 2 & 5 & 6 \end{pmatrix}$, then we obtain $R(\pi) = \{(2,3), (3,6), (3,4)\}$, $\neg R(\pi) = \{(1,2), (1,3), (1,4), (1,5), (1,6), (2,5), (2,6), (3,5), (3,6), (4,5), (4,6), (5,6)\}$. Next, we have
\[ \omega \pi = \begin{pmatrix}
    1 & 2 & 3 & 4 & 5 &6 \\
    6 & 5 & 4 & 3 & 2 &1
\end{pmatrix} \begin{pmatrix}
    1 &2 &3 & 4 & 5 & 6 \\
    1 & 4 & 3 & 2 & 5 & 6
\end{pmatrix} = \begin{pmatrix}
    1 & 2 & 3 & 4 & 5 & 6\\
    6 & 3 & 4 & 5& 6 &1
\end{pmatrix}
\]

Then, $R(\omega \pi) = \{(1,2), (1,3), (1,4), (1,5), (1,6), (2,5), (2,6), (3,5), (3,6), (4,5), (4,6), (5,6)\}$, i.e., $R(\omega \pi)  = \neg R(\pi).$ The homeomorphic image of the corresponding Thurston configurations are shown in Fig.\ref{braids}.
\end{example}

Denote by $\Omega_n$ a $n\times n$ matrix such that all its entries are $1$, except elements of the main diagonal which are equal to $0$.

\begin{lemma}\label{M+M'=1}
    For any permutation $\pi\in \mathfrak{S}_n$, 
     \[
     M(R(\pi)) + M(R(\omega \pi)) = \Omega_n.
    \] 
\end{lemma}

\begin{proof}
     It is clear that $\Omega_n = M(\Delta_n)$. Finally, by $\neg R(\pi) := R(\omega \pi) =  \Delta_n \setminus R(\pi)$, the statement follows.
\end{proof}

\section{Meanders, its Graphs, and its Gauss Diagrams} \label{sec:meanders}

This is a key section of this paper. We construct a Gauss diagram $\mathfrak{G}$ for any  meandric permutation. Although interesting machinery can be developed for meandric permutations, it is natural (and convenient) to begin with an arbitrary permutation $\pi \in \mathfrak{S}_n$.

\begin{definition}[\textbf{a Gauss diagram and a graph for a permutation}]\label{Gauss_for_meander}
Let $\pi \in \mathfrak{S}_n$ be a permutation on the set $\{1,2,\ldots, n\}$. Set $A = \{0,\theta_n, 1,2,\ldots, n \}$, where $\theta_n$ is assumed to be an empty symbol if and only if $n \equiv 0 (\bmod{2})$. Consider the following double occurrence word $W(\pi)$ in letters of the alphabet $A$, written as a sequence, $(0, 1, 2, \ldots, n, \theta_n,0, \pi(1), \pi(2), \ldots, \pi(n), \theta_n).$ 

The corresponding Gauss diagram is called \textit{Gauss diagram for the permutation}, and denoted by $\mathfrak{G}(\pi)$, and the corresponding chord-intersection graph (= interlacement graph) we simply denote by $\Gamma(\pi)$.

In the case when a permutation $\pi$ is meandric we call $\mathfrak{G}(\pi)$ and $\Gamma(\pi)$ as a \textit{meandric Gauss diagram} and a \textit{meandric graph} respectively.
\end{definition}

\begin{lemma}\label{Gauss_for_permutation}
    For a given permutation $\pi \in \mathfrak{S}_n$, the Gauss diagram $\mathfrak{G}(\pi)$ can be described as follows, 
    \[
\mathfrak{G}(\pi): = \mathcal{D} \cap \{\ell_0, \ldots, \ell_n \} \cup \partial \mathcal{D},
\]
where all $\ell_i$, $i =1, \ldots, n$ form Thurston configuration $R(\omega\pi^{-1})$, $\ell_0, \ell_{\theta_n},\ell_1, \ldots, \ell_{\theta_n}$ are extra lines intersect all $\ell_i$,  $\mathcal{D}\subset \mathbb{R}^2$ is a disk such that $\mathcal{D}\setminus \partial \mathcal{D}$ contains all intersections of $\ell_0,\ell_{\theta_n}, \ell_1,\ldots, \ell_n$, and $\ell_{\theta_n}$ is empty if and only if $n \equiv 0(\bmod{2}).$
\end{lemma}
\begin{proof}
  By Definition \ref{Gauss_for_permutation}, a Gauss diagram $\mathfrak{G}(\pi)$ looks like in Fig.\ref{chord_for_proof}. It follows that that corresponding permutation $\pi'$ has the following form
  \[
 \pi'=  \begin{pmatrix}
      \pi(n) & \pi(n-1) & \ldots & \pi(2) & \pi(1) \\
      1 & 2 & \ldots & n-1 & n
  \end{pmatrix}.
  \]
Thus, for any $1 \le i \le n$, $\pi'(\pi(i)) = \omega(i)$, setting $i= \pi^{-1}(j)$ we get $\pi' = \omega \pi^{-1}$. Finally, by Definition \ref{Gauss_for_meander} and Definition \ref{shadows} the statement follows.
\end{proof}

\begin{corollary}\label{meandric_graph}
 A graph $\Gamma(\pi)$ for a permutation $\pi \in \mathfrak{S}_n$ can be also described as follows; $\Gamma(\pi) = (V,E)$ with a set of vertices $V = \{0, \theta_n,1,2,\ldots, 2n\}$ and the set of edges $E: = \bigcup_{1\le i \le n}\{(0,i)\} \cup \{(\theta_n, i)\} \cup R(\omega \pi^{-1}).$
\end{corollary}

\begin{example}
 Let us consider the following permutation 
 \[
 \pi = \begin{pmatrix}
     1 & 2 & 3 & 4 & 5 & 6 \\
       3 & 2 & 1 & 4 & 5 & 6 
 \end{pmatrix},
 \]
 we have 
 \[
  \omega \pi^{-1} =  \begin{pmatrix}
     1 & 2 & 3 & 4 & 5 & 6 \\
     4 & 5 & 6 & 3 & 2 & 1 
 \end{pmatrix}.
 \]

The corresponding Thurston configuration and the Gauss diagram are shown in Fig.\ref{ex4}. It is easy to see that the permutation $\pi$ is meandric, indeed, the corresponding meander $M_6$ is shown in Fig.\ref{meander10}.
\end{example}

\begin{figure}[h!]
    \centering
     \begin{tikzpicture}[scale = 0.8]
          \begin{scope}[xshift = -5cm, scale = 0.7]
         {\foreach \x in
          {1,2,3,4,5,6}
         {
           \fill(\x,0) node[above] {$\x$};
           \fill(\x,0) circle (2pt) ;
           \fill(\x,-6) node[below] {$\x$};
            \fill(\x,-6) circle (2pt) ;
         }
        }
        \draw (1,0) to [out = 270, in = 90] (4,-6);
        \draw (2,0) to [out = 270, in =90]  (5,-6);
        \draw (3,0) to [out = 270, in =90]  (6,-6);
        \draw (4,0) to [out = 270, in =90]  (1,-2) to [out = 270, in = 90] (3,-6);
        \draw (5,0) to [out = 270, in =90]  (1,-4) to [out = 279, in = 90] (2,-6);
        \draw (6,0) to [out = 270, in =90]  (1,-6);
        \end{scope}
        \begin{scope}[xshift = 5cm, yshift = -1.5cm,scale = 0.7]
         {\foreach \angle/ \label in
       {180/0, 155/1, 130/2,  105/3, 80/4, 55/5, 30/6, 0/0,
       330/3, 305/2, 280/1, 255/4, 230/5, 210/6
        }
     {
        \fill(\angle:3.5) node{$\label$};
        \fill(\angle:3) circle (4pt) ;
      }
    }

    {
 \foreach \angle/ \label in 
{330/6, 305/5, 280/4, 255/3, 230/2, 210/1}
     \fill(\angle:4.5) node{$\boxed{\label}$};
}
    
    \draw[line width = 2] (0,0) circle (3);
    
    \draw (0:3) -- (180:3);
    \draw (155:3) -- (280:3);
    \draw (130:3) -- (305:3);
    \draw (105:3) -- (330:3);
    \draw (80:3) -- (255:3);
    \draw (55:3) -- (230:3);
    \draw (30:3) -- (210:3);
        \end{scope}
     \end{tikzpicture}
    \caption{Thurston's configuration $R(\omega \pi^{-1})$ (at the left) and the Gauss diagram $\mathfrak{G}(\pi)$ (at the right); we see that a configuration of all chords with endpoints $1,\ldots, 6$ is exactly as $R(\omega \pi^{-1} )$; the boxed numbers are ends of the corresponding interval starts from the top number.}\label{ex4}
\end{figure}

\begin{figure}[h!]
    \centering
\begin{tikzpicture}[scale = 1.3]
\fill[gray!10] (1.5, 0) circle (1.5);
\draw[help lines] (1.5, 0) circle (1.5);
\draw[ultra thick] (0, 0) to (3, 0);
\draw[ thick] (0.0625277, 0.428571) to[out = 0, in = 90, distance = 16.1568] (1.28571, 0)
 to[out = -90, in = -90, distance = 5.38559] (0.857143, 0)
 to[out = 90, in = 90, distance = 5.38559] (0.428571, 0)
 to[out = -90, in = -90, distance = 16.1568] (1.71429, 0)
 to[out = 90, in = 90, distance = 5.38559] (2.14286, 0)
 to[out = -90, in = -90, distance = 5.38559] (2.57143, 0)
to[out = 90, in = 180, distance = 5.38559] (2.93747, 0.428571);

\draw[fill] (0.0625277, 0.428571) circle (0.05) node [left] {$m_0$};
\draw[fill] (2.93747, 0.428571) circle (0.05) node [right] {$m_1$};
\draw[fill] (0, 0) circle (0.05) node [left] {$p_0$};
\draw[fill] (3, 0) circle (0.05) node [right] {$p_1$};

\draw[fill] (1.28571, 0) circle (0.025) node [above right] {$3$};
\draw[fill] (0.857143, 0) circle (0.025) node [above right] {$2$};
\draw[fill] (0.428571, 0) circle (0.025) node [above left] {$1$};
\draw[fill] (1.71429, 0) circle (0.025) node [below] {$4$};
\draw[fill] (2.14286, 0) circle (0.025) node [below] {$5$};
\draw[fill] (2.57143, 0) circle (0.025) node [below] {$6$};
\end{tikzpicture}   
    \caption{Meander $M_6$, $\pi(M_6) =  \begin{pmatrix}
     1 & 2 & 3 & 4 & 5 & 6 \\
       3 & 2 & 1 & 4 & 5 & 6 
 \end{pmatrix} = \pi$.}
    \label{meander10}
\end{figure}

It is worth noting that the Gauss diagram $\mathfrak{G}(\pi)$ is realizable, indeed, we get the following curve (see Fig.\ref{curve_with_meander}). We see that if we intersect this curve with a disc (in the figure the disk has a dashed boundary) we then get exactly the meander.


\begin{figure}[h!]
      \begin{tikzpicture}
          \begin{scope}[xshift = -4cm]
           \draw[name path =o,line width = 1.5] (0,0) to [out = 60, in = 90] (5,0) to [out=270, in = 270] (0,0);     
           \draw (0,0) to [out = 90, in = 210] (1,2);
           \draw[name path = a] (1,2) to [out = 30, in = 70] (3.2,0.8);
           \draw[name path =b] (3.2,0.8) to [out = 250, in = 300] (1.9,1.5);
      
      \draw[name path =c] (1.9,1.5) to [out = 120, in = 90] (1, 1);
      
      \draw[name path = d] (1,1) to [out =  270, in = 200] (3,0.4);
      
      \draw[name path =e] (3,0.4) to [out = 20, in = 220] (4,1.5);

      \draw[name path =f] (4,1.5) to [out = 40, in = 90] (4.5,0.8);

      \draw[name path =h] (4.5,0.8) to [out = 270, in = 180] (5,0.2);

      \draw (5,0.2) to [out = 0, in = 90] (5.5,-0.5)  to[out = 270, in = 0] (2.7,-1.9) to [out = 180, in = 240] (0,0);
     
     \fill [name intersections={of=o and a, by={A}}]
       (A) circle (2pt) node[above] {$3$};  
     
     \fill [name intersections={of=o and b, by={B}}]
       (B) circle (2pt) node[above right] {$2$};
       
     \fill [name intersections={of=o and d, by={C}}]
       (C) circle (2pt) node[above left] {$1$};
       
     \fill [name intersections={of=o and e, by={D}}]
       (D) circle (2pt) node[above left] {$4$}; 
      
     \fill [name intersections={of=o and f, by={E}}]
       (E) circle (2pt) node[above right] {$5$};
       
     \fill [name intersections={of=o and h, by={F}}]
       (F) circle (2pt) node[above right] {$6$}; 
     
       \draw [fill](0,0) circle (2pt);
       \node[left] at (0,0){$0$};
          \end{scope}
    \begin{scope}[xshift = 4cm]
                    \draw[dashed, name path=s1] (1.5,-0.8) to [out = 330, in = 270] (6,1);
      \draw[dashed, name path=s2] (6,1) to [out = 90, in = 90] (0,1) to [out = 270, in = 150] (1.5,-0.8);
      
      \draw[name path =o,line width = 1.5] (0,0) to [out = 60, in = 90] (5,0) to [out=270, in = 270] (0,0);
     
      \draw[name path = a0] (0,0) to [out = 90, in = 210] (1,2);
      \draw[name path = a] (1,2) to [out = 30, in = 70] (3.2,0.8);
            
      \draw[name path =b] (3.2,0.8) to [out = 250, in = 300] (1.9,1.5);
      
      \draw[name path =c] (1.9,1.5) to [out = 120, in = 90] (1, 1);
      
      \draw[name path = d] (1,1) to [out =  270, in = 200] (3,0.4);
      
      \draw[name path =e] (3,0.4) to [out = 20, in = 220] (4,1.5);

      \draw[name path =f] (4,1.5) to [out = 40, in = 90] (4.5,0.8);

      \draw[name path =h] (4.5,0.8) to [out = 270, in = 180] (5,0.2);

      \draw[name path = k] (5,0.2) to [out = 0, in = 90] (5.5,-0.5)  to[out = 270, in = 0] (2.7,-1.9) to [out = 180, in = 240] (0,0);
     
     \fill [name intersections={of=o and a, by={A}}]
       (A) circle (2pt) node[above] {$3$};  
     
     \fill [name intersections={of=o and b, by={B}}]
       (B) circle (2pt) node[above right] {$2$};
       
     \fill [name intersections={of=o and d, by={C}}]
       (C) circle (2pt) node[above left] {$1$};
       
     \fill [name intersections={of=o and e, by={D}}]
       (D) circle (2pt) node[above left] {$4$}; 
      
     \fill [name intersections={of=o and f, by={E}}]
       (E) circle (2pt) node[above right] {$5$};
       
     \fill [name intersections={of=o and h, by={F}}]
       (F) circle (2pt) node[above right] {$6$}; 
     
       \draw [fill](0,0) circle (2pt);
       \node[left] at (0,0){$0$};

    \fill [name intersections={of=o and s1, by={p1}}]
       (p1) circle (2pt) node[below] {$p_1$};

    \fill [name intersections={of=o and s2, by={p2}}]
       (p2) circle (2pt) node[right] {$p_0$};

    \fill [name intersections={of=a0 and s2, by={m0}}]
       (m0) circle (2pt) node[left] {$m_0$};

    \fill [name intersections={of=s1 and k, by={m1}}]
       (m1) circle (2pt) node[right] {$m_1$};
          \end{scope}
      \end{tikzpicture}   
    \caption{The plane curve correspondences to the diagram $\mathfrak{G}(\pi)$ (at the left) and an intersection of this curve with a disc with the dashed boundary (at the right); we thus get meander $M_6$.}
    \label{curve_with_meander}
\end{figure}


\begin{theorem}\label{realization=meandric}
    For a given permutation $\pi \in \mathfrak{S}_n$, $\mathfrak{G}(\pi)$ is realizable if and only if $\pi$ is a meandric permutation.
\end{theorem}

\begin{proof}~\\
    (1) Let $\mathfrak{G}(\pi)$ be a realizable diagram, say, by a plane curve $\gamma$, then by Construction \ref{curve_from_diagram}, $\gamma$ contains a loop, say $\gamma_0$, with origin corresponding to endpoints of chord $\ell_0$ and contains intersection points $1,2,\ldots, n.$ First of all we remark that $\Gamma(\pi)$ does not contain chord $\ell_{\theta_n}$. Indeed, by Jordan curve theorem, $\gamma_0$ divides a plane into two regions, it follows that the number of common intersection points of $\gamma_1$ with $\gamma_0$ must be even, where $\gamma_1:=\gamma \setminus \gamma_0.$ Since $\mathfrak{G}(\pi)$ all other chords intersect $\ell_0$ then the set of all common intersection points of the curves one-to-one correspondences to chords $\ell_1, \ldots, \ell_n$, \textit{i.e.,} $n$ is even.

    Next, set $\gamma_0  = \gamma_{01} \cup \gamma_{1n} \cup \gamma_{n0}$, with $\gamma_{01} \cap \gamma_{1n} = \{1\}$, $\gamma_{01} \cap \gamma_{n0} = \{0\}$, $(\gamma_{01} \cup \gamma_{n0}) \cap \gamma_{1n} = \{1,n\}$, and $\gamma_{1n}$ contains all $1,\ldots, n.$

    Chose points $p_0 \in \gamma_{01}$, and $p_1 \in \gamma_{n0}$ such that $p_0,p_1 \ne 0,1,n$. Take a two-dimensional disk $\mathcal{D}$ such that $\partial \mathcal{D} \cap \gamma = \{p_0,p_1\}$ and $\gamma_{1n} \subset \mathcal{D}$. Next, set $\{m_0, m_1\}:=\gamma_1 \cap \partial \mathcal{D}$. Indeed, by Jordan curve theorem $\partial \mathcal{D}$ divides plane into two regions and by construction $0$, $\gamma \cap \mathcal{D}$ belongs to different regions.

    Finally, it is clear that $(\mathcal{D}, \{m_0,m_1,p_0,p_1\}, \{\gamma_0 \cap \mathcal{D}, \gamma_1 \cap \mathcal{D}\})$ is a meander, and the statement follows.

(2) Let $M = (\mathcal{D}, \{m_0,m_1,p_0,p_1\}, \{\mu, \ell \})$ be a meander with permutation $\pi \in \mathfrak{S}_n$. If $n$ is odd we then consider a meander $M' = (\mathcal{D}, \{m_0,m_1',p_0,p_1\}, \{\mu', \ell \})$, where $\mu'$ intersects $\ell$ at the first $n$ points as $\mu$ do and also intersect $\ell$ at an extra point $n+1$ which is the rightmost marked point on $\ell.$

Thus, we may assume that $n$ even.

Since $\ell$ divides $\partial \mathcal{D}$ into two arcs, say, $\arc{p_0p_1}$ and $\arc{p_1p_0}$, then both $m_0,m_1$ belong only one of them, say $\arc{p_0p_1}$, because $n$ is assumed to be even.

It is cleat the $\partial \mathcal{D}$ is a simple curve (a curve that does not cross itself) then, by Jordan curve theorem, it divides $\mathbb{R}^2$ into two regions, say $A,B$ and we set $\mu, \ell \in A.$

Set $\gamma: [0.1] \to \mathbb{R}^2$ be a smooth proper embedding such that $\gamma(0) = m_1$, $\gamma(1) = p_0$ and $\gamma(x) \in B$ for any $0 < x < 1$.

Let $\alpha: = \arc{m_1m_0}$ be an arc of $\partial \mathcal{D}$ such that $p_0,p_1 \in \alpha$ and set $\mu': = \alpha \cup \gamma \cup \ell $. By Whitney Approximation Theorem for maps, we get a smooth closed curve $\widetilde{\mu}$.

Finally, by construction of $\widetilde{\mu}$, its Gauss diagram $\mathfrak{G}(\widetilde{\mu})$ is exactly the same as $\mathfrak{G}(\pi)$. This completes the proof.    
\end{proof}

Thus, it clarifies now why we take an interest in the Gauss diagram technique.

 \begin{theorem}\label{meander_matrix=idempotent}
  An adjacency matrix of any meander graph is idempotent.
\end{theorem}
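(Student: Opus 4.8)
The plan is to derive idempotency from the realizability criterion of Shtylla--Traldi--Zulli (Theorem \ref{STZ}) together with the single structural feature that distinguishes a meandric graph from a general interlacement graph: it has a \emph{universal vertex}, namely the vertex $0$, adjacent to all the others. All computations are over $\mathsf{GF}(2)$. A direct combinatorial check that $M^{2}=M$ straight from the defining inequalities of a meandric permutation is conceivable but looks considerably heavier, so I would go through Theorem \ref{STZ}.

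First I would record that $\mathfrak{G}(\mu)$ is realizable: by Construction \ref{meander->curve} the curve $\widetilde{\mu}$ is a bona fide generic plane curve, and by Proposition \ref{G(M)=G(m)} one has $\mathfrak{G}(\mu)=\mathfrak{G}(\widetilde{\mu})$, so $\mathfrak{G}(\mu)$ is the Gauss diagram of an honest plane curve. Let $M=M(\Gamma(\mu))$ be the adjacency matrix of the meandric graph, which is exactly the interlacement matrix of $\mathfrak{G}(\mu)$, with rows and columns indexed by $0,1,\dots,2n$. By Theorem \ref{STZ} there is a diagonal matrix $D=\mathrm{diag}(d_0,\dots,d_{2n})$ over $\mathsf{GF}(2)$ with $N:=M+D$ idempotent, $N^{2}=N$.

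Next I would read off consequences of $N^{2}=N$ entry by entry, using $N_{ij}=M_{ij}$ for $i\neq j$, $N_{ii}=d_i$, and $x^{2}=x$ in $\mathsf{GF}(2)$. The diagonal equations give $\sum_{k}N_{ik}=d_i$, that is $\deg_{\Gamma(\mu)}(i)\equiv 0\pmod 2$ for every vertex $i$ (the even-valence property of realizable diagrams). Now bring in the universal vertex: for $j\ge 1$ we have $N_{00}=d_0$ and $N_{0k}=N_{0j}=1$ for all $k\ge 1$, so
\[
(N^{2})_{0j}\;=\;d_0+\sum_{k=1}^{2n}N_{kj}\;=\;d_0+d_j+\bigl(\deg_{\Gamma(\mu)}(j)-1\bigr)\;\equiv\;d_0+d_j+1,
\]
where the middle sum has been split into $N_{jj}=d_j$ and the number $\deg_{\Gamma(\mu)}(j)-1$ of neighbours of $j$ lying in $\{1,\dots,2n\}$, which is odd by the previous paragraph. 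Since $(N^{2})_{0j}=N_{0j}=1$, this forces $d_j=d_0$ for every $j\ge 1$, hence $D=d_0 I$. Finally, substituting $N=M+d_0 I$ into $N^{2}=N$ and expanding over $\mathsf{GF}(2)$ (so $2d_0 M=0$ and $d_0^{2}=d_0$) gives $M^{2}+d_0 I=M+d_0 I$, i.e. $M^{2}=M$.

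The point demanding real care, namely the identification $\mathfrak{G}(\mu)=\mathfrak{G}(\widetilde{\mu})$, is already settled by Proposition \ref{G(M)=G(m)}; the rest is the short computation above, whose whole leverage comes from the fact that chord $0$ crosses every other chord. I do not expect a serious obstacle here, only the need to keep the arithmetic in characteristic $2$ throughout and to treat the cases $d_0=0$ and $d_0=1$ uniformly, as the closing substitution does.
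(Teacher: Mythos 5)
Your argument is correct, and it rests on the same two pillars as the paper's proof: realizability of $\mathfrak{G}(\mu)$ (via Construction \ref{meander->curve} and Proposition \ref{G(M)=G(m)}) feeding into Theorem \ref{STZ}, together with the fact that chord $0$ crosses every other chord. Where you diverge is in how $M^2=M$ is extracted from the existence of an idempotent $M+D$. The paper translates $(M+D)^2=M+D$ into the linear system $\mathbf{X}_i+\mathbf{X}_j=\langle m_i,m_j\rangle+1$ indexed by the edges, observes that solvability forces a parity condition on the right-hand sides summed around any cycle, and then applies this to the $3$-cycles $(0,i),(i,j),(j,0)$ (together with the degenerate condition that $m_{i,j}=0$ forces $\langle m_i,m_j\rangle=0$) to conclude $\langle m_i,m_j\rangle\equiv m_{i,j}$ entrywise. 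You instead solve for $D$ itself: the diagonal of $N^2=N$ gives even valence at every vertex, and the $0$th row then forces $d_j=d_0$ for all $j\ge 1$, so $D$ is a scalar matrix and cancels out of $(M+d_0I)^2=M+d_0I$. The two computations are essentially dual --- your unknowns $d_i$ are precisely the paper's variables $\mathbf{X}_i$ --- but your version is shorter, avoids any discussion of cycles and consistency of the system, and has the small bonus of determining $D$ completely rather than merely exploiting necessary conditions for its existence. I see no gap: the key identity $\deg(j)-1\equiv 1\pmod 2$ for the neighbours of $j$ inside $\{1,\dots,2n\}$ is justified by the even-valence step, and the closing substitution handles $d_0=0$ and $d_0=1$ uniformly as you note.
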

\begin{proof}

(1) Let $M$ be the adjacency matrix of a Gauss diagram $\mathfrak{G}(\mu)$. Being $M$ symmetric, we then have $M^2 = (\langle m_i, m_j \rangle)_{1\le i,j \le n}$, over $\mathsf{GF}(2)$, where 
\[
\langle m_i, m_j\rangle: = m_{i,1}m_{j,1} + \cdots + m_{i,n}m_{j,n},
\]
and $m_k:=(m_{k,1}, \ldots, m_{k,n})$ is the $k$th row of the $M$.

(2) Let $D = \mathrm{diag}(\mathbf{X}_1,\ldots, \mathbf{X}_n)$ be a diagonal $n\times n$ matrix over $\mathsf{GF}(2)$. Since $M$ is symmetric then the matrix $M':=DM +MD$ is $M' = (m'_{i,j})_{1 \le i,j \le n}$, $m'_{i,j} = (\mathbf{X}_i + \mathbf{X}_j)m_{i,j}$ for all $1\le i, j \le n.$ We have $(M+D)^2 = M^2 + MD + DM + D^2$, is it clear that $D^2 = D$.

Next, by Theorem \ref{STZ}, the diagram $\mathfrak{G}(\mu)$ is realizable if and only if $(M+D)^2 = M+D$, \textit{i.e.,} $M^2 + M = MD +DM$. Thus we obtain the following system of linear equations
    \[
   \left\{  m_{i,j} \mathbf{X}_i + m_{i,j}\mathbf{X}_j = \langle m_i, m_j \rangle + m_{i,j}, \quad 1\le i,j \le n  \right.
  \]
and hence $\mathfrak{G}(\mu)$ is realizable if and only if this system has a solution over a field $\mathsf{GF}(2).$ 

Remark that if $m_{i,j}=0$ then $\langle m_i, m_j \rangle =0$ and in the case $i=j$ we obtain $m_{i,i} = \langle m_i, m_i \rangle = 0$. It follows that $\mathfrak{G}(\mu)$ is realizable if and only if the system  
 \[
  \mathbf{X}_i + \mathbf{X}_j = \langle m_i, m_j \rangle + 1, \qquad (i,j) \in K
 \]
 has a solution over the field $\mathsf{GF}(2)$, where $K \subseteq \{1,\ldots, n\}\times \{1,\ldots, n\}$ is a subset such that whenever $(i,j) \in K$ then $m_{i,j} = 1$.

(3) Let us consider now the corresponding entrancement graph $\Gamma(\mu)$ and a cycle $C = (c_1,\ldots, c_\ell)$, where we assume that $c_1 = (i_1,i_2), \ldots, c_\ell = (i_\ell, i_1)$, we then get 
 \[
  \begin{cases}
   \mathbf{X}_{i_1} + \mathbf{X}_{i_2} \equiv \omega((i_1,i_2)) + 1, \\
   \phantom{\mathbf{X}_{i_1}\,}\vdots \phantom{+\mathbf{X}_{i_2}} \ddots \phantom{\omega((i_1,i_2))} \vdots \\
   \mathbf{X}_{i_\ell} + \mathbf{X}_{i_1} \equiv \omega((i_\ell, i_1)) + 1,
  \end{cases} 
 \]
therefore $\mathfrak{G}(\mu)$ is realizable if and only if $\langle m_{i_1}, m_{i_2} \rangle + \cdots +\langle m_{i_\ell}, m_{i_1} \rangle \equiv (0 \bmod{2})$ for any cycle $C$ in the $\Gamma(\mu)$.

(4) We have $m_{0,i} = 1$ for any $1 \le i \le n$ where we have set that $M=(m_{i,j})_{0\le i,j \le n}$ is a matrix of a meander graph. Let $m_{i,j} = 1$ for some $1\le i,j \le n$ then the graph contains a cycle with three edges $(v_0,v_i), (v_i,v_j), (v_j,v_0)$ thus, by the discussion above, $\langle m_0,m_i \rangle + \langle m_i, m_j \rangle + \langle m_0, m_j \rangle \equiv 1 \bmod{2}$. Since $\langle m_0, m_k \rangle  =1 + \langle m_k, m_k \rangle \equiv 1 \bmod{2}$ because of $\langle m_k, m_k \rangle \equiv 0 \bmod{2}$ for any $1 \le k \le n$. Therefore $\langle m_i, m_j \rangle \equiv 1 \bmod{2}$. We thus get $\langle m_i, m_j \rangle \equiv m_{i,j} \bmod{2}$ for any $0 \le i,j \le n$, \textit{i.e.,} $M^2 = M$ as claimed
\end{proof}

\begin{corollary}\label{meander_graph_criteria}
 Let $\Gamma = (V,E)$ be a finite graph with $V = \{0,1,\ldots, 2n\}$, $n >0$. The finite graph $\Gamma = (V,E)$ is a meandric graph if and only if the following conditions hold:
 \begin{enumerate}
     \item $(0,i) \in E$ for any $1\le i \le n$
     \item if $(i,j) \in E$ then either $(i,k) \in E$ or $(k,j) \in E$ for all $i < k < j$
     \item if $(i,j)\in E$ and $(j,k) \in E$ then $(i,k) \in E$ for any $0 \le i,j, k \le 2n$
     \item its adjacency matrix $M$ is idempotent.
 \end{enumerate}
\end{corollary}
\begin{proof}
 It immediately follows from the construction of a meandric graph (see Remark \ref{meandric_graph}), Lemma \ref{criteria} and Theorem \ref{meander_matrix=idempotent}. 
\end{proof}

\section{The Main Result}

Recall that $\Omega_n$ denotes a matrix of size $n\times n$ which all its entries are $1$ except elements of the main diagonal which are assumed to be zero.

\begin{lemma}\label{[M,O]=0}
    Let $M$ be a symmetric matrix over the field $\mathsf{GF}(2)$ with zero diagonal and  sum of all elements of any row is $0$, then $[M, \Omega]$ is a zero matrix over $\mathsf{GF}(2).$
\end{lemma}

\begin{proof}
    Let $M_1,\ldots, M_n$ be all rows of $M$, let $1\le i \le j \le n$. We have
    \begin{eqnarray*}
        \langle M_i, \Omega_j \rangle &=& m_{1,i} + \cdots + m_{i-1, i} + m_{i,i+1} + \cdots + \widehat{m_{i,j}} + \cdots + m_{i,n} \\
        &=& m_{i,j},\\
        \langle \Omega_i, M_j \rangle &=& m_{1,j} + \cdots + \widehat{m_{i,j}} + \cdots + m_{j-1,i} + m_{j,j+1} + \cdots + m_{j,n} \\
        &=& m_{i,j}.
    \end{eqnarray*}

    Thus, $\langle M_i + \Omega_i, M_j + \Omega_j = m_{i,j} + m_{i,j} \equiv 0 \bmod{2}$, and the statement follows.
\end{proof}

Now we are in a position to give criteria for permutation to be meandric. As we have mentioned before (see Remark \ref{useful_for_R}), for a given permutation $\pi$ we have a matrix $M(R(\pi))$ of the corresponding Thurston configuration $R(\pi)$. Just for convenience, we give an explicit reformulation of such matrices.

\begin{definition}\label{matrix_for_permutation}
 For a given permutation $\pi \in \mathfrak{S}_n$, we construct a symmetric matrix $M(\pi) = (m_{i,j})_{1\le i,j \le n}$ of the size $n\times n$ with coefficients in the field $\mathsf{GF}(2)$ as follows; 
 \[
  m_{i,j} : = \begin{cases}
   0 & \mbox{if $i=j$,}\\
   1 & \mbox{if $i<j$ and $\pi(i) > \pi(j)$},\\
   0 & \mbox{if $i<j$ and $\pi(i) < \pi(j)$},
  \end{cases}
 \]
 for all $1 \le i<j \le n.$
\end{definition}

\begin{lemma}\label{M<-->M'}
    For a given permutation $\pi \in \mathfrak{S}_n$, 
    \begin{gather*}
        M^2(\pi) \equiv M(\pi) \bmod{2} \Longleftrightarrow M^2(\pi^{-1}) \equiv M(\pi^{-1}) \bmod{2}, \\
        M^2(\pi) \equiv M(\pi) + \Omega_n \bmod{2} \Longleftrightarrow M^2(\pi^{-1}) \equiv M(\pi^{-1})\bmod{2}.
    \end{gather*}
\end{lemma}

\begin{proof}
  Let $M = M(\pi) = (m_{i,j})_{1 \le i,j \le n}$, $M'=M(\pi^{-1}) = (m_{i,j}')_{1\le i,j\le n}$. By Corollary \ref{R(pi^{-1})}, $R(\pi^{-1}) = \pi R(\pi)$, hence $m_{\pi(i), \pi(j)} \in (M(\pi^{-1}))_{\pi(i)}$ for any $1 \le i,j \le n$, where $A_{(i)}$ means the $i$-th string of a matrix $A.$ it follows that $m'_{i,j} = m_{\pi^{-1}(i), \pi^{-1}(j)}$ and then 
  \[
   \langle M'_{(p)}, M'_{(q)} \rangle = \sum_{j=1}^n m_{\pi^{-1}(p), \pi^{-1}(j)} m_{\pi^{-1}(q), \pi^{-1}(j)}  = \langle M_{(p)}, M_{(q)} \rangle,
  \]
  and the statement follows.
 \end{proof}

\begin{theorem}\label{the_main_result}
 A permutation $\pi \in \mathfrak{S}_{n}$ is meandric if and only if $(M(\pi))^2 \equiv M(\pi)  (\bmod{2})$.
\end{theorem}
\begin{proof}~

(1) Let $n$ be even. 

(a) Let $(M(\pi))^2 = M(\pi)$. By Lemma \ref{M+M'=1}, $M(\omega \pi^{-1}) = \Omega_n + M(\pi^{-1})$. We have
\[
 (M(\omega \pi^{-1}))^2 = \Omega_n^2 + [\Omega_n, M(\pi^{-1})] + (M(\pi^{-1}))^2
\]

By Lemma \ref{[M,O]=0}, $(M(\omega \pi^{-1}))^2 \equiv \Omega_n^2 + (M(\pi))^2$. It is clear that $\Omega_n^2 \equiv E_n$ -- identity matrix because of $n$ is even. By assumption $(M(\pi))^2 \equiv M(\pi)$, hence, by Lemma \ref{M<-->M'}, 
\[
 (M(\omega \pi^{-1}))^2 \equiv E_n + M(\pi^{-1}).
\]

Let us consider the corresponding graph $\Gamma(\pi) = (V,E)$ (see Definition \ref{Gauss_for_meander} and Corollary \ref{meandric_graph}) for the permutation $\pi$. We have 
\begin{align*}
    & V = \{0,1,2,\ldots, n\},\\
    & E: = \bigcup_{1\le i \le n}\{(0,i)\} \cup R(\omega\pi^{-1})
\end{align*}

Thus, its adjacency matrix $M(\Gamma(\pi))$ has the following form
\[
  M(\Gamma(\pi)) = \begin{pmatrix}
      0 & \begin{matrix}
          1 & \ldots & 1 
      \end{matrix} \\
      \begin{matrix}
          1 \\
          \vdots \\
          1
      \end{matrix} & M(\omega\pi^{-1})
  \end{pmatrix}.
\]

Indeed, if we consider the set $R(\omega\pi^{-1})$ as a binary relation then, by Remarks \ref{negR(mu)}, \ref{useful_for_R} and Definition \ref{matrix_for_permutation}, the matrix representation of $R(\omega\pi^{-1})$ is exactly the matrix $M(\omega \pi^{-1})$.

Set $M(\Gamma(\pi)) = (m_{i,j})_{0 \le i,j \le n}$ then we get; $m_{0,0} = 0$, $m_{0,1}= m_{1,0} = \cdots m_{0,n} = m_{n,0}= 1$, and $M(\omega\pi^{-1}) = (m_{i,j})_{1\le i,j \le n}$.

Further, let $(M(\Gamma(\pi)))^2 =  (b_{p,q})_{0\le p,q \le n}$. Being the $M(\Gamma(\pi))$ symmetric, we obtain
\begin{eqnarray*}
    b_{0,0} &=& n \equiv 0 (\bmod{2}),\\
    b_{0,i} &=& m_{1,i} + \cdots + m_{i-1,i} + m_{i,i+1} + \cdots + m_{i,n}\\
    &\equiv& 1 (\bmod{2}),\\
    b_{i,j} &\equiv& \langle m_i, m_j \rangle+1 (\bmod{2}),
\end{eqnarray*}
where, as before, $\langle m_i, m_j\rangle: = m_{i,1}m_{j,1} + \cdots + m_{i,n}m_{j,n},$ and $m_k:=(m_{k,1}, \ldots, m_{k,n})$ is the $k$th row.

Thus, we get
\[ (M(\Gamma(\pi)))^2 = 
 \begin{pmatrix}
     0 & \begin{matrix}
         1 & \cdots & 1
     \end{matrix} \\
     \begin{matrix}
         1 \\
         $\vdots$\\
         1
     \end{matrix} & (M(\omega\pi^{-1}))^2 + J_n
 \end{pmatrix}
 \]
where $J_n$ is a $n\times n$ matrix where every entry is equal to one. We thus have
\begin{eqnarray*}
    (M(\omega\pi^{-1}))^2 + J_n &=& E_n+ M(\pi^{-1}) + J_n \\
    &=& \Omega_n + M(\pi^{-1}) \\
    &=& M(\omega \pi^{-1}).
\end{eqnarray*}

It follows that the matrix $M(\Gamma(\pi))$ is idempotent in the field $\mathsf{GF}(2)$.

Finally, by the construction of $\Gamma(\pi)$ and Lemma \ref{criteria}, $\Gamma(\pi)$ satisfies the first three conditions of Corollary \ref{meander_graph_criteria}.  Hence, by Theorem \ref{meander_matrix=idempotent}, $\Gamma(\pi)$ is a meandric graph for the $\pi$. 
 
(b) Let $\pi$ be a meander, as we have mentioned above the corresponding matrix $M(\Gamma(\pi))$ for the graph $\Gamma(\pi)$ has the following form 
 \[ M(\Gamma(\pi)) = 
 \begin{pmatrix}
     0 & \begin{matrix}
         1 & \cdots & 1
     \end{matrix} \\
     \begin{matrix}
         1 \\
         $\vdots$\\
         1
     \end{matrix} & M(\omega\pi^{-1})
 \end{pmatrix}
 \]
 by Lemma \ref{[M,O]=0},
   \[ (M(\Gamma(\pi)))^2 = 
 \begin{pmatrix}
     0 & \begin{matrix}
         1 & \cdots & 1
     \end{matrix} \\
     \begin{matrix}
         1 \\
         $\vdots$\\
         1
     \end{matrix} & (M(\omega \pi^{-1}))^2 + J_n
 \end{pmatrix}
 \]

 Next, $M(\omega \pi^{-1}) = \Omega_n + M(\pi^{-1})$, then $(M(\omega \pi^{-1}))^2 = E_n + (M(\pi^{-1}))^2$. We then have 
 \[
  (M(\omega \pi^{-1}))^2 + J_n = \Omega_n + (M(\pi^{-1}))^2.
 \]
 
By Corollary \ref{meander_graph_criteria}, $(M(\Gamma(\pi))^2 \equiv M(\Gamma(\pi))$, hence
\[
 \Omega_n + (M(\pi^{-1}))^2 \equiv M(\omega \pi^{-1}) = \Omega_n + M(\pi^{-1}),
\]
and by Lemma \ref{M<-->M'}, the statement follows.

(2) Let $n$ be odd. Then, for a given $\pi \in \mathfrak{S}_n$ we consider 
\[
 \overline{\pi}: = \begin{pmatrix}
     1 & \ldots & n & n+1 \\
     \pi(1) & \ldots & \pi(n) & n+1
 \end{pmatrix}.
\]

By Lemma \ref{n<-->n+1}, $\pi$ is meandric if and only if $\overline{\pi}$ is meandric. Since $n+1$ is even then, we may apply the previous discussion to $\overline{\pi}.$ 

It is clear that

\[
 M(\overline{\pi}) = \begin{pmatrix}
     M(\pi) & \begin{matrix}
         0 \\ \vdots \\ 
     \end{matrix} \\
     \begin{matrix}
         0 & \ldots  
     \end{matrix} & 0
 \end{pmatrix}, \qquad  M^2(\overline{\pi}) = \begin{pmatrix}
     M(\pi)^2 & \begin{matrix}
         0 \\ \vdots \\ 
     \end{matrix} \\
     \begin{matrix}
         0 & \ldots  
     \end{matrix} & 0
 \end{pmatrix}
\]
and the statement follows.
 \end{proof}

For a given permutation $\pi \in \mathfrak{S}_n$, and for $i,j \in \{1, \ldots,n\}$ we call all inversions of form $(i,k)$, $(j,k)$ \textit{common inversions} for $i,j$, and a number of all common inversions we denote by $c_\pi(i,j)$.

Hence, the following result immediately follows from the previous Theorem

\begin{corollary}[{Criteria of meandric permutation}]\label{the_main_criteria}
    A permutation $\pi$ is meandric if and only if 
    \[
     c_{\pi}(i,j) \equiv \begin{cases}
         1 (\bmod{2}), & \mbox{if there is an inversion $(i,j)$, }\\
         0 (\bmod{2}), & \mbox{if there is no inversion $(i,j)$},
     \end{cases}
    \]
    for any $i,j \in \{1,\ldots, n\}.$
\end{corollary}

\section{A Construction of Meandric Permutations}

In this section, we present an algorithm to construct meandric graphs. For a given graph $\Gamma = (V,E)$ we denote by $N(v)$ a set of all neighbors of the vertex $v \in V$ and by $|X|$ we denote a cardinality of the set $X$. 

\begin{lemma}\label{even}
   Let $\Gamma(\pi) = (V,E)$ be a meandric graph then
   \begin{align*}
       & |N(v)| \equiv 0 (\bmod{2}), \\
       & |N(v) \cap N(u)| \equiv 0(\bmod{2}),\\
       & |N(v) \cap N(w)| \equiv 1(\bmod{2})
   \end{align*}
where $v,u,w \in V$, $u \notin N(v)$ and $w \in N(v)$.
\end{lemma}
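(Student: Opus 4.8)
The plan is to obtain all three congruences as a direct transcription of the matrix identity $M^2 = M$, where $M = M(\Gamma(\mu)) = (m_{i,j})_{0 \le i,j \le 2n}$ is the adjacency matrix of the meandric graph $\Gamma(\mu)$; by Theorem \ref{meander_matrix=idempotent} this matrix is idempotent over $\mathsf{GF}(2)$. Throughout I identify the $i$-th row $m_i$ of $M$ with the characteristic vector of $N(i)$, so that $m_{i,k} = 1$ if and only if $k \in N(i)$.

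First I would record the combinatorial meaning of the entries of $M^2$. Since $M$ is symmetric with zero diagonal (the adjacency matrix of a simple graph has no loops, so $v \notin N(v)$), the $(i,j)$-entry of $M^2$ over $\mathsf{GF}(2)$ is
\[
(M^2)_{i,j} = \sum_{k=0}^{2n} m_{i,k} m_{k,j} = \sum_{k=0}^{2n} m_{i,k} m_{j,k} \equiv \bigl|\,\{\,k : k \in N(i) \cap N(j)\,\}\,\bigr| = |N(i) \cap N(j)| \pmod 2 ,
\]
and in particular, using $m_{i,k}^2 = m_{i,k}$ in $\mathsf{GF}(2)$, the diagonal entry is $(M^2)_{i,i} \equiv |N(i)| \pmod 2$.

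Now I would simply equate $M^2$ with $M$ entrywise and specialize. Taking $i = j = v$ gives $|N(v)| \equiv m_{v,v} = 0 \pmod 2$. Taking $i = v$, $j = u$ with $u \notin N(v)$ gives $|N(v) \cap N(u)| \equiv m_{v,u} = 0 \pmod 2$. Taking $i = v$, $j = w$ with $w \in N(v)$ gives $|N(v) \cap N(w)| \equiv m_{v,w} = 1 \pmod 2$. This is exactly the assertion of the lemma.

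There is essentially no obstacle here — the statement is a dictionary translation of idempotency of $M$ into the language of neighbourhoods, and it is precisely the case distinction "$m_{v,u}=0$ versus $m_{v,w}=1$" that produces the two different parities in the last two lines. The only point that deserves a word of care is the no-loops convention ($v \notin N(v)$), which is what guarantees $m_{i,k}^2 = m_{i,k}$ is the only simplification needed and that the three cases $u = v$, $u \notin N(v)$, $w \in N(v)$ are handled consistently; once this is noted the proof is complete.
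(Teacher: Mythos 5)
Your proof is correct and is essentially identical to the paper's own argument: both invoke the idempotency $M^2 = M$ from Theorem \ref{meander_matrix=idempotent}, identify $(M^2)_{i,j} = \langle m_i, m_j\rangle$ with $|N(v_i)\cap N(v_j)| \bmod 2$ (and the diagonal with $|N(v_i)| \bmod 2$), and then split into the cases $m_{i,j}=0$ and $m_{i,j}=1$. No meaningful difference in route or content.
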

\begin{proof}
    Indeed, since $M$ is symmetric and $M^2$ (see the proof of Theorem \ref{meander_matrix=idempotent}) we have $M^2 = (m_{i,j}')_{1 \le i,j \le n}$ where $m_{i,j}':= m_{i1}m_{j1} + \cdots + m_{i,n}m_{j,n} = \langle m_i, m_j \rangle$. Hence, by $m_{i,i}=0$, $m_{i,i}'=0$ \textit{i.e.,} $N(v_i) \equiv 0 (\bmod{2})$ for any $1\le i \le n$. Next, if $m_{i,j} = 0$ (\textit{resp.} $m_{i,j}=1$) then the corresponding vertices $v_i, v_j$ are not joined (\textit{resp.} are joined) and by $M^2 = M$, $\langle m_i, m_j \rangle \equiv (0 \bmod{2})$ (\textit{resp.} $\equiv (1 \bmod{2})$). Finally, since $\langle m_i, m_j\rangle = |N(v_i) \cap N(v_j)|$ we complete the proof.
\end{proof}

\begin{remark}
 Since the vertex, $0$ is joined with all other vertices in a meandric graph \textbf{we, just for convenience, omit it.} It follows that by Lemma \ref{even},we have to require the opposite conditions that presented in the Lemma.    
\end{remark}

\RestyleAlgo{ruled}

\begin{algorithm}[hbt!]
\caption{An algorithm to construct meanders}\label{algorithm}
    \KwData{$n \equiv 0 (\bmod{2})$, $V = V_0 \sqcup V_1$, $V_0 =\{2,4, \ldots, N\}$, $V_1 = \{1,3,\ldots, N-1\}$}
\KwResult{$S=\{s_1,\ldots, s_n\}$}
  \While{$V_0,V_1 \ne \varnothing$}{
    \For{$i \leftarrow 0$ \KwTo $1$}{
          choose $v\in V_i$\\
          $T:=\{v\}$\\
          $R:=\cup_{v' >v, v' \notin T}\{(v,v')\}$\\
            \eIf{$\begin{cases} 
                     |N(v)| \equiv 1 (\bmod{2}) \\
                     |N(v) \cap N(u)| \equiv 1 (\bmod{2}) & (u,v) \notin R,\, u \in T \\
                     |N(v) \cap N(w)| \equiv 0 (\bmod{2}) & (w,v) \in R,\, w \in T
                  \end{cases}$\\}
            {$j:=2$\\
            $s_j:=v$\\
            $j:=j+1$\\
            $V:=V\setminus \{v\}$\\
              go back to the beginning}
            {choose another $v\in V_i$}
    }
    }
\end{algorithm}

\begin{example}
  Let $N=8$, then $V_0=\{2,4,6,8\}$ and $V_1=\{1,3,5,7\}$.
  \begin{enumerate}
      \item Picture them as it shown in Fig.\ref{f1} a). Since we have to start with $1$ we thus join it with other vertices. We have to choose a vertex from $V_0$.
  \item \textbf{Choose vertex $6$} and join it with vertices $7,8$ (we draw a circle around a chosen vertex) (see Fig.\ref{f1} b)). It is clear that vertex $6$ has an odd number of neighbors. We have to choose a vertex with an odd number.
  

  \begin{figure}[h!]
      \centering
       \begin{tikzpicture}[scale =0.5]
\begin{scope}[xshift = -10cm]
         {\foreach \angle/ \label in
       { 90/1, 45/2, 0/3, 315/4, 270/5, 225/6, 180/7, 135/8
        }
     {
        \fill(\angle:3.5) node{$\label$};
        \fill(\angle:3) circle (5pt) ;
        \draw[gray] (90:3) -- (\angle:3);
      }
    } 
\end{scope}
       \begin{scope}[xshift = 0cm]
                        {\foreach \angle/ \label in
       { 90/1, 45/2, 0/3, 315/4, 270/5, 225/6, 180/7, 135/8
        }
     {
        \fill(\angle:3.5) node{$\label$};
        \fill(\angle:3) circle (5pt) ;
        \draw[gray] (90:3) -- (\angle:3);
      }
    }
    \draw (225:3) -- (180:3);
    \draw (225:3) -- (135:3);
    
    \draw(225:3) circle(10pt);
       \end{scope}
\begin{scope}[xshift = 10cm]
             {\foreach \angle/ \label in
       { 90/1, 45/2, 0/3, 315/4, 270/5, 225/6, 180/7, 135/8
        }
     {
        \fill(\angle:3.5) node{$\label$};
        \fill(\angle:3) circle (5pt);
        \draw[gray] (90:3) -- (\angle:3);
      }
    }
    \draw (0:3) -- (315:3);
    \draw (0:3) -- (270:3);
    \draw (0:3) -- (180:3);
    \draw (0:3) -- (135:3);
    
    \draw (225:3) -- (180:3);
    \draw (225:3) -- (135:3);

    \draw(0:3) circle(10pt);
    \draw(225:3) circle (10pt);
\end{scope}
   \fill(-10,-5.2) node{$a)$};
  \fill(0,-5.2) node{$b)$};
  \fill(10,-5.2) node{$c)$}; 
 \end{tikzpicture}
       \caption{}\label{f1}
  \end{figure}

  
 \item \textbf{Let us choose vertex $3$} and join it with $5,7,8$. (see Fig.\ref{f1} c)). We see that the number of neighbors for $6$ is odd and the number of common neighbors for $6$ and $3$ is odd. We then can choose a vertex with an even number.

\item Let us choose vertex $2$ and join it with $4,5,7,8$ (see Fig.\ref{f2} a)). We see that the number of common neighbors for $2$ and $3$ is even: $N(2) \cap N(3) = \{4,5,7,8\}$ it follows that \underline{we cannot choose vertex $2$ on this step.}
 

\begin{figure}[h!]
    \centering
     \begin{tikzpicture}[scale=0.5]
     \begin{scope}[xshift = -10cm]
             {\foreach \angle/ \label in
       { 90/1, 45/2, 0/3, 315/4, 270/5, 225/6, 180/7, 135/8
        }
     {
        \fill(\angle:3.5) node{$\label$};
        \fill(\angle:3) circle (5pt);
        \draw[gray] (90:3) -- (\angle:3);
      }
    }
    \draw (0:3) -- (315:3);
    \draw (0:3) -- (270:3);
    \draw (0:3) -- (180:3);
    \draw (0:3) -- (135:3);
    
    \draw (225:3) -- (180:3);
    \draw (225:3) -- (135:3);

    \draw[dashed] (45:3) -- (315:3);
    \draw[dashed] (45:3) -- (270:3);
    \draw[dashed] (45:3) -- (180:3);
    \draw[dashed] (45:3) -- (135:3);
    
    \draw(0:3) circle(10pt);
    \draw(225:3) circle (10pt);
    \draw(45:3) circle (10pt);
     \end{scope}
\begin{scope}
    {\foreach \angle/ \label in
       { 90/1, 45/2, 0/3, 315/4, 270/5, 225/6, 180/7, 135/8
        }
     {
        \fill(\angle:3.5) node{$\label$};
        \fill(\angle:3) circle (5pt) ;
     \draw[gray] (90:3) -- (\angle:3);
      }
    }
     \draw (0:3) -- (315:3);
    \draw (0:3) -- (270:3);
    \draw (0:3) -- (180:3);
    \draw (0:3) -- (135:3);
    
    \draw (225:3) -- (180:3);
    \draw (225:3) -- (135:3);

    \draw (315:3) -- (270:3);
    \draw (315:3) -- (180:3);
    \draw (315:3) -- (135:3);
    
    \draw(0:3) circle(10pt);
    \draw(225:3) circle (10pt);
     \draw(315:3) circle (10pt);
\end{scope}
\begin{scope}[xshift = 10cm]
 {\foreach \angle/ \label in
       { 90/1, 45/2, 0/3, 315/4, 270/5, 225/6, 180/7, 135/8
        }
     {
        \fill(\angle:3.5) node{$\label$};
        \fill(\angle:3) circle (5pt) ;
     \draw[gray] (90:3) -- (\angle:3);
      }
    }
     \draw (0:3) -- (315:3);
    \draw (0:3) -- (270:3);
    \draw (0:3) -- (180:3);
    \draw (0:3) -- (135:3);
    
    \draw (225:3) -- (180:3);
    \draw (225:3) -- (135:3);

    \draw (315:3) -- (270:3);
    \draw (315:3) -- (180:3);
    \draw (315:3) -- (135:3);

    \draw[dashed](180:3) --(135:3);
    
    \draw(0:3) circle(10pt);
    \draw(225:3) circle (10pt);
    \draw(315:3) circle (10pt);
    \draw(180:3) circle (10pt); 
\end{scope}
   \fill(-10,-5.2) node{$a)$};
  \fill(0,-5.2) node{$b)$};
  \fill(10,-5.2) node{$c)$}; 
     \end{tikzpicture}
     \caption{}\label{f2}
\end{figure}


\item  \textbf{Let us choose vertex $4$} and join it with vertices $5,7,8$ (see Fig.\ref{f2} b)). It is easy to see that all required conditions hold. Hence we can choose a vertex with an odd number. Let us choose vertex $7$ and join it with $8$ (see Fig.\ref{f2} c)). We see that $N(7) \cap N(4) = \{1,3,8\}$, \textit{i.e.,} $|N(7) \cap N(4)|$ is odd therefore \underline{we cannot chose vertex $7$ on this step.} Thus we have only one possibility.

\item \textbf{Let us choose vertex $5$} and join it with $7,8$ (see Fig.\ref{f3} a)). All conditions hold and hence we have to choose a vertex with an even number. If we chose vertex $8$ (see Fig.\ref{f3} b)) we then get $N(8) \cap N(6) = \{1\}$ \textit{i.e.,} the number of common neighbors for $8$ and $6$ is odd. Hence \underline{we cannot choose vertex $8$ on this step.} It follows that we have only one possibility --- to choose vertex $2$ then $7$ and then $8$.
 

\begin{figure}[h!]
    \centering
     \begin{tikzpicture}[scale=0.5]
     \begin{scope}[xshift = -5cm]
   {\foreach \angle/ \label in
       { 90/1, 45/2, 0/3, 315/4, 270/5, 225/6, 180/7, 135/8
        }
     {
        \fill(\angle:3.5) node{$\label$};
        \fill(\angle:3) circle (5pt) ;
     \draw[gray] (90:3) -- (\angle:3);
      }
    }
     \draw (0:3) -- (315:3);
    \draw (0:3) -- (270:3);
    \draw (0:3) -- (180:3);
    \draw (0:3) -- (135:3);
    
    \draw (225:3) -- (180:3);
    \draw (225:3) -- (135:3);

    \draw (315:3) -- (270:3);
    \draw (315:3) -- (180:3);
    \draw (315:3) -- (135:3);

    \draw (270:3) -- (180:3);
    \draw (270:3) -- (135:3);
    
    \draw(0:3) circle(10pt);
    \draw(225:3) circle (10pt);
    \draw(315:3) circle (10pt);    
    \draw(270:3) circle (10pt);
     
     \end{scope}
\begin{scope}[xshift = 5cm]
   {\foreach \angle/ \label in
       { 90/1, 45/2, 0/3, 315/4, 270/5, 225/6, 180/7, 135/8
        }
     {
        \fill(\angle:3.5) node{$\label$};
        \fill(\angle:3) circle (5pt) ;
     \draw[gray] (90:3) -- (\angle:3);
      }
    }
     \draw (0:3) -- (315:3);
    \draw (0:3) -- (270:3);
    \draw (0:3) -- (180:3);
    \draw (0:3) -- (135:3);
    
    \draw (225:3) -- (180:3);
    \draw (225:3) -- (135:3);

    \draw (315:3) -- (270:3);
    \draw (315:3) -- (180:3);
    \draw (315:3) -- (135:3);

    \draw (270:3) -- (180:3);
    \draw (270:3) -- (135:3);
    
    \draw(0:3) circle(10pt);
    \draw(225:3) circle (10pt);
    \draw(315:3) circle (10pt);    
    \draw(270:3) circle (10pt);
    \draw(135:3) circle (10pt);

\end{scope}

   \fill(-5,-5.2) node{$a)$};
  \fill(5,-5.2) node{$b)$};
     \end{tikzpicture}
     \caption{}\label{f3}
\end{figure}


\item Thus we get the following sequence $S= (1, 6, 3,4,5,2,7,8)$ and the corresponding meander has the following form as it shown in Fig.\ref{meander6}.
  \end{enumerate}

\end{example}


 \begin{figure}[h!]
    \centering
     \begin{tikzpicture}[scale = 2.5]
     \fill[gray!10] (1.5, 0) circle (1.5);
     \draw[thick] (0, 0) to (3, 0);
\draw[ultra thick] (0.2174, 0.777778) to[out = 0, in = 90, distance = 4.18879] (0.333333, 0)
 to[out = -90, in = -90, distance = 20.9439] (2, 0)
 to[out = 90, in = 90, distance = 12.5664] (1, 0)
 to[out = -90, in = -90, distance = 4.18879] (1.33333, 0)
 to[out = 90, in = 90, distance = 4.18879] (1.66667, 0)
 to[out = -90, in = -90, distance = 12.5664] (0.666667, 0)
 to[out = 90, in = 90, distance = 20.9439] (2.33333, 0)
 to[out = -90, in = -90, distance = 4.18879] (2.66667, 0)
to[out = 90, in = 180, distance = 4.18879] (2.7826, 0.777778);
\draw[help lines] (1.5, 0) circle (1.5);
\draw[fill] (0.2174, 0.777778) circle (0.05);
\draw[fill] (2.7826, 0.777778) circle (0.05);
\draw[fill] (0, 0) circle (0.05);
\draw[fill] (3, 0) circle (0.05);

\fill (0, 0) circle (0.05);
\node at (0, 0) [below left] {$p_0$};

\fill (3, 0) circle (0.05);
\node at (3, 0) [below right] {$p_1$};

\fill (0.2174, 0.777778) circle (0.05);
\node at (0.2174, 0.777778) [left] {$m_0$};

\fill (0.333333, 0) circle (0.05);
\node at (0.333333, 0) [below left] {$1$};

\fill (0.666667, 0)  circle (0.05);
\node at (0.666667, 0) [below left] {$2$};

\fill (1, 0) circle (0.05);
\node at (1, 0) [below left] {$3$};

\fill (1.333333, 0) circle (0.05);
\node at (1.333333, 0) [below right] {$4$};

\fill (1.66667, 0) circle (0.05);
\node at (1.66667, 0) [below right] {$5$};

\fill (2, 0) circle (0.05);
\node at (2, 0) [below right] {$6$};

\fill (2.33333, 0) circle (0.05);
\node at (2.33333, 0) [below] {$7$};

\fill (2.66667, 0) circle (0.05);
\node at (2.66667, 0) [below right] {$8$};

\fill (2.7826, 0.777778) circle (0.05);
\node at (2.7826, 0.777778) [below right] {$m_1$};

\end{tikzpicture}
    \caption{The meander with permutation $\pi = \begin{pmatrix}
        1 & 2 & 4 & 4& 5 &6 &7 &8\\
        1 & 6 & 3& 4 & 5 & 2 & 7 & 8
    \end{pmatrix}$}
    \label{meander6}
\end{figure}


\newpage


\begin{thebibliography}{99}
\bibitem[A88]{A88} V.I. Arnol'd, The branched covering $\mathbb{C}P^2 \to S^4$, hyperbolicity and projective topology, {\it Sibirsk. Mat. Zh}, {\bf 29}(5), 237, 36--47, (1988).

\bibitem[B22]{B22} Y. Belousov, Irreducible Meanders, arXiv:2112.102893v3

\bibitem[ECHLPTh]{EpThur} D.B.A. Epstein, I.W. Cannon, D.E. Holt, S.V.F. Levy, M.S. Paterson and W.P. Thurston, {\it Word Processing in Groups, Jones and Bartlett Publishers,} INC., (1999)

\bibitem[DiF95]{DiF95} P. Di Francesco, Folding and coloring problem in mathematical physics, {\it Bull.Amer. Soc}, {\bf 37}(3), 251--307, (2000).

\bibitem[DFGG97]{DFGG97} P. Di Francesco, O. Golinelli, and E. Guitter, Meander, folding, and arch statistics, {\it Math. Comput. Modelling,} {\bf{26}} (8--10), 97-–147, (1997), hep-th/9506030. Combinatorics and physics (Marseilles, 1995). MR1492504

\bibitem[GL20]{GL}  A. Grinblat and V. Lopatkin, On realizability of Gauss diagrams and constructions of meanders, {\it Journal of Knot Theory and Its Ramifications}, {\bf29}(05):2050031, (2020).

\bibitem[KLV22]{KLV} A. Khan,  A. Lisitsa and A. Vernitski, Training AI to Recognize Realizable Gauss Diagrams: The Same Instances Confound AI and Human Mathematicians,Proceedings of the 14th International Conference on Agents and Artificial Intelligence, 990--995, 2022,SCITEPRESS-Science and Technology Publications.

\bibitem[LLV]{LLV} A. Lisitsa, V. Lopatkin, and A. Vernitski, Describing realizable Gauss diagrams using the concepts of parity or bipartite graphs, {\it Journal of Knot Theory and Its Ramifications}, {\bf 32}(10), 2350059, (2023).

\bibitem[M84]{M} G. Moran, Chords in a circle and linear algebra over $\mathsf{GF}(2)$, {\it J. Combin. Theory Ser. A}, {\bf 37}(3),  239--247, (1984).

\bibitem[R83]{R83} P. Rosenstiehl, Planar permutations defined by two intersecting Jordan curves, Graph theory and combinatorics (Cambridge, 1983), Academic Press, London, 259--271, (1984).

\bibitem[STZ09]{STZ09} B. Shtylla, L. Traldi, and L. Zulli, On the realization of double occurrence words, {\it Discrt. Math.,} {\bf 309}(6), 1769--1773, (2009).

\bibitem[Z21]{Z21} A. Zvonkin, Meanders: A personal perspective to the memory of Pierre Rosenstiehl, {\it European Journal of Combinatorics}, In Press, Corrected Proof (2023).

\end{thebibliography}
\end{document}